\newcommand {\ii} {\infty}
\newcommand {\al} {\alpha}
\newcommand {\lb} {\lambda}
\newcommand {\sm} {\setminus}
\newcommand {\su} {\subset}
\newcommand {\mc} {\mathcal}
\newtheorem{teo}{Theorem}[section]
\newtheorem{pro}{Proposition}[section]
\theoremstyle{definition}
\newtheorem{rem}{Remark}[section]
\title{Skew-Hermitian operators in real Banach spaces of self-adjoint compact operators  }
\keywords{Symmetric sequence space, Banach ideal of compact operators, skew-Hermitian operator}
\subjclass[2010]{46L52, 47B10, 47C15}
\begin{document}
\date{July 12, 2019}

\begin{abstract}
Let $\mathcal H$ be a  complex infinite-dimensional separable Hilbert space, and let $\mathcal K(\mathcal H)$ be the $C^*$-algebra of  compact linear operators in $\mathcal=l H$. Let $(E,\|\cdot\|_E)$ be a  symmetric sequence space. If $\{\mu(n,x)\}$ are the singular values of $x\in\mathcal K(\mathcal H)$, let $\mc C_E=\{x\in\mathcal K(\mc H): \{\mu(n,x)\}\in E\}$ with $\|x\|_{\mathcal C_E}=\|\{\mu(n,x)\}\|_E$, $x\in\mathcal C_E$, be the Banach ideal of compact operators generated by $E$.  Let  $\mathcal C_E^h=\{x\in \mathcal C_E : x=x^*\}$ be the real Banach subspace of self-adjoint operators in $(\mathcal C_E, \|\cdot\|_{\mathcal C_E})$.  We show that in the case when $\mc C_E$ is a  separable or perfect Banach symmetric  ideal, $\mathcal C_E \neq \mathcal C_{l_2}$, for any skew-Hermitian operator $H\colon\mathcal C_E^h \to \mathcal C_E^h$ there exists self-adjoint bounded linear operator $a$ in $\mc H$ such that $H(x)=i(xa - ax)$ for  all $x\in\mathcal C_E^h$.
\end{abstract}

\author{B.R. Aminov}
\address{National University of Uzbekistan\\
Tashkent, 100174, Uzbekistan}
\email{aminovbehzod@gmail.com}
\author{V.I. Chilin}
\address{National University of Uzbekistan\\
Tashkent, 100174, Uzbekistan}
\email{vladimirchil@gmail.com; chilin@ucd.uz}

\maketitle

\section{Introduction}

Let $(\mathcal H, (\cdot, \cdot))$ be an infinite-dimensional complex separable Hilbert space, and  let $\mathcal B(\mathcal H)$ (respectively, $\mathcal K(\mathcal H)$) be the $C^*$-algebra of  all bounded (respectively, compact) linear operators on $\mathcal H$.
For a compact operator $x\in\mathcal K(\mathcal H)$, we denote by $ \big\{\mu(n,x)\big\}_{n = 1}^{\infty}$ the singular value sequence of $x$, \ that is, the decreasing rearrangement of the eigenvalue sequence of $| x|= (x^*x)^{\frac12}$. We let $\mathrm{Tr}$ denote the standard trace on $\mathcal B(\mathcal H)$. For $p\in[1,\infty)$ ($p=\infty$), we let
\[\mathcal C_p:=\Big\{x\in\mathcal K(\mathcal H)\,:\,\mathrm{Tr}\big(|x|^p\big)<\infty\Big\} \ \ (respectively, \mathcal C_\infty=\mathcal K(\mathcal H) ) \]
denote the $p$-th Schatten ideal of $\mathcal B(\mathcal H)$, with the norm
\[\|x\|_p:= \mathrm{Tr}\big(|x|^p\big)^{\frac1p},\quad (respectively, \ \|x\|_\infty:= \sup\limits_{n\geq1}|\mu(n,x)|).\]

The Schatten ideals $\mathcal C_p= \mathcal C_{l_p}$ are examples of Banach symmetric ideals $\mathcal C_E=\{x\in\mc K(\mc H):  \big\{\mu(n,x)\big\}_{n = 1}^{\infty}\in E\}$ with norm $\|x\|_{\mc C_E}=\| \big\{\mu(n,x)\big\}_{n = 1}^{\infty}\|_E$ of compact operators generated by symmetric sequence spaces  $(E, \|\cdot\|_{E})$ (see section 2 below).

Let $[\cdot,\cdot]$ be a semi-inner product on $\mathcal C_E$ compatible with the norm  $\|\cdot\|_{\mc C_E}$, that is, $\|x\|_{\mc C_E} = \sqrt{[x,x]}$ for all $x \in \mc C_E$ \ \cite[Ch. 2, \S 1]{dragomir}. A bounded linear operator $H: \mc C_E \to \mc C_E$  is called {\it Hermitian} if  $[Hx, x]$ is real for all $x \in  \mc C_E$ \cite[Ch. 5, \S 2]{flem1}.

In 1981  A.Sourour \cite{sourour} gave the following description of all the Hermitian operators acting in separable Banach symmetric ideal.
\begin{teo}\label{t11}
Let  $(\mathcal C_E, \|\cdot\|_{\mathcal C_E})$ be a separable Banach symmetric ideal, and let  $\mathcal C_E \neq \mathcal C_2$. Then for any Hermitian operator $H\colon \mathcal C_E \to \mathcal C_E$ \  there are self-adjoint operators $a, b \in \mathcal B(\mathcal H)$ such that $H(x) = ax + xb$ for all $x \in \mc C_E$.
\end{teo}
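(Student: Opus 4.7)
The plan is to reduce the structural problem for Hermitian operators to the classification of surjective isometries of $\mathcal C_E$ via Lumer's one-parameter group construction, and then differentiate at $t=0$. Recall that, for a bounded operator $H$ on a complex Banach space, $H$ is Hermitian in the sense of the semi-inner product if and only if the group $U_t = \exp(itH)$ ($t\in\mathbb R$) consists of surjective linear isometries. So the first step is to verify that each $U_t$ is a surjective isometry of $(\mathcal C_E,\|\cdot\|_{\mathcal C_E})$; this only uses general Banach space theory.

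The core step is to invoke the description of the surjective isometries of a separable Banach symmetric ideal $\mathcal C_E\neq\mathcal C_2$. The classical result (Sourour, and before him Arazy in the reflexive case) asserts that every such isometry $U$ has one of the two forms $U(x)=uxv$ or $U(x)=ux^{t}v$, where $u,v$ are unitaries on $\mathcal H$ and $x^t$ denotes the transpose with respect to a fixed orthonormal basis; the hypothesis $\mathcal C_E\neq\mathcal C_2$ is precisely what rules out the Hilbert–Schmidt case, where the isometry group is much larger. I would treat this classification as the main black box. Applying it to the one-parameter group $\{U_t\}$, the continuity in $t$ and the fact that $U_0=\mathrm{id}$ belongs to the component of the identity force the ``standard'' form to hold uniformly in $t$: there exist unitaries $u(t),v(t)\in\mathcal B(\mathcal H)$ with
\[
U_t(x) = u(t)\,x\,v(t),\qquad x\in\mathcal C_E,\ t\in\mathbb R,
\]
and $u(0)=v(0)=I$.

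Next, the group law $U_{t+s}=U_tU_s$ combined with irreducibility (say, by testing on rank-one operators $\xi\otimes\eta$, where the factorization $u(t+s)\xi\otimes v(t+s)^*\eta = u(t)u(s)\xi\otimes v(t)^*v(s)^*\eta$ determines $u,v$ up to reciprocal scalar phases) yields that $t\mapsto u(t)$ and $t\mapsto v(t)$ may be chosen so that both are strongly continuous one-parameter unitary groups on $\mathcal H$. Stone's theorem then supplies self-adjoint, possibly unbounded, generators $a$ and $b$ with $u(t)=\exp(ita)$ and $v(t)=\exp(itb)$. Boundedness of $a$ and $b$ follows from the fact that $H$ itself is bounded on $\mathcal C_E$: testing on rank-one operators $\xi\otimes\eta$ and using the norm $\|\xi\otimes\eta\|_{\mathcal C_E}=\|\xi\|\,\|\eta\|$ forces $\|a\xi\|\le C\|\xi\|$ and $\|b\eta\|\le C\|\eta\|$.

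Finally, I differentiate $U_t(x)=e^{ita}xe^{itb}$ at $t=0$ in the norm of $\mathcal C_E$. Using boundedness of $a,b$ and the inequality $\|yx\|_{\mathcal C_E}\le\|y\|_{\mathcal B(\mathcal H)}\|x\|_{\mathcal C_E}$ (and symmetric on the right), the derivative equals $iax+ixb$, hence $iH(x)=i(ax+xb)$, yielding $H(x)=ax+xb$ as required. The principal obstacle is the isometry-classification input: one must know that, outside of the Hilbert–Schmidt exception, every surjective isometry of $\mathcal C_E$ is of the ``weighted composition'' type $x\mapsto uxv$ or $x\mapsto ux^tv$, and that continuity of the one-parameter group eliminates the transpose branch. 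Everything else is then a routine application of Stone's theorem and the boundedness of $H$.
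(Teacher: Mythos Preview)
The paper does not prove Theorem~\ref{t11}; it is stated there as Sourour's 1981 result and cited from \cite{sourour}. What the paper \emph{does} prove is Theorem~\ref{t12}, and it explicitly says it follows ``a modification of the original proof of Sourour Theorem~1.'' From that adaptation (Propositions~\ref{p32}--\ref{p36}) one can see exactly how Sourour's argument runs: it is a \emph{direct} semi-inner-product computation. One first shows, using carefully chosen support functionals built from finite-dimensional data in $(\mathbb R^n,\|\cdot\|_E)$ (and crucially using $\mathcal C_E\neq\mathcal C_2$ to find $n$ where $\|\cdot\|_E$ is not Euclidean), that $\langle H(\eta\otimes\eta),\xi\otimes\xi\rangle=0$ whenever $\eta\perp\xi$; this forces $H(\eta\otimes\eta)=\eta\otimes f+f\otimes\eta$ for some $f$, from which the operators $a,b$ are assembled on an orthonormal basis and extended by density.

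Your route is the reverse of this. You take the surjective-isometry classification of $\mathcal C_E$ as a black box, exponentiate $H$, and recover $a,b$ via Stone's theorem. Logically that is fine \emph{provided} the isometry classification is obtained independently, but in \cite{sourour} the flow is the other way: Sourour first proves the Hermitian-operator theorem (his Theorem~1) directly, and then \emph{deduces} the isometry classification from it. So citing ``Sourour's isometry theorem'' to prove Theorem~\ref{t11} is circular unless you point to a genuinely independent source for the isometry result on all separable symmetric ideals (Arazy covers only the reflexive case). That is the real gap in the proposal.

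A secondary rough spot: your boundedness argument for the Stone generators $a,b$ is too quick. From $H(\xi\otimes\eta)=(a\xi)\otimes\eta+\xi\otimes(b^*\eta)$ one cannot read off $\|a\xi\|$ by a single norm estimate, since the two summands can cancel; one needs an extra orthogonality or polarization step. This is fixable, but as written it is a hand-wave rather than a proof.
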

In \cite{ach}, a variant of Theorem \ref{t11} was obtained  for any perfect Banach symmetric ideals  $(\mathcal C_E, \|\cdot\|_{\mathcal C_E}), \ \mathcal C_E \neq \mathcal C_2$ \ ( recall that $(\mathcal C_E, \|\cdot\|_{\mathcal C_E})$ is a perfect ideals, if  $\mathcal C_E = \mathcal C_{E}^{\times\times}$ \cite{garling} (see section 2 below)).

Let $\mathcal C_E^h=\{x\in \mathcal C_E : x=x^*\}$ be a Banach real subspace  in Banach symmetric ideals  $(\mathcal C_E, \|\cdot\|_{\mathcal C_E})$. A linear bounded operator $H\colon \mathcal C_E^h \to \mathcal C_E^h$ is said to be {\it skew-Hermitian}, if $[H(x),x]=0$ for all $x\in \mathcal C_E^h$, where $[\cdot,\cdot]$ is a semi-inner product on $\mathcal C_E$ compatible with the norm  $\|\cdot\|_{\mc C_E}$.

It is clear that the linear operator $H\colon \mathcal C_E^h \to \mathcal C_E^h$ defined by $H(x)=i(xa - ax)$, where $a=a^*\in  \mathcal B(\mathcal H)$, \ $i^2 = -1$, \ is a skew-Hermitian operator.

Our main result states that if $(\mathcal C_E, \|\cdot\|_{\mathcal C_E})$ is a separable or a perfect Banach symmetric ideal of compact operators, $\mathcal C_E \neq \mathcal C_2$, then there are no other skew-Hermitian operators in $(\mathcal C_E^h, \|\cdot\|_{\mathcal C_E})$:
\begin{teo}\label{t12}
Let $(\mathcal C_E, \|\cdot\|_{\mathcal C_E})$ be a separable or a perfect Banach symmetric ideal, \  $\mathcal C_E \neq \mathcal C_2$, and let  $H\colon \mathcal C_E^h \to \mathcal C_E^h$  be a skew-Hermitian operator. Then there exists self-adjoint operator $a\in \mathcal B(\mathcal H)$ such that $H(x)=i(xa - ax)$ for all $x \in \mc C_E^h$.
\end{teo}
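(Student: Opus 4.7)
The plan is to reduce the statement to Theorem \ref{t11} (or, in the perfect case, to its analogue from \cite{ach}) by complexifying $H$. Every $x \in \mc C_E$ decomposes uniquely as $x = x_1 + i x_2$ with $x_1 = (x+x^*)/2$ and $x_2 = (x-x^*)/(2i)$ in $\mc C_E^h$, so I would define the $\mathbb{C}$-linear extension $\widetilde H \colon \mc C_E \to \mc C_E$ by
\[
\widetilde H(x_1 + i x_2) := H(x_1) + i H(x_2),
\]
and set $S := -i \widetilde H$. A direct check shows that $\widetilde H$ is bounded, satisfies $\widetilde H(x^*) = \widetilde H(x)^*$, and restricts to $H$ on $\mc C_E^h$. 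The main goal is then to prove that $S$ is a Hermitian operator on the complex Banach space $(\mc C_E, \|\cdot\|_{\mc C_E})$, since once this is established Theorem \ref{t11} will yield the structural description of $H$ almost automatically.

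The verification that $S$ is Hermitian is the technical heart of the argument. I would first fix a compatible semi-inner product $[\cdot,\cdot]$ on $\mc C_E$ whose restriction to $\mc C_E^h \times \mc C_E^h$ is real-valued; for $y \in \mc C_E^h$ such a form can be built from a norming functional at $y$ represented by a self-adjoint element $z_y$ of the K\"othe dual $\mc C_{E^\times}$, because $\mathrm{Tr}(x z_y) \in \mathbb R$ whenever $x \in \mc C_E^h$ by cyclicity of the trace. Applying the skew-Hermitian identity $[H(y), y] = 0$ at $y = x_1 + x_2 \in \mc C_E^h$ and cancelling the diagonal terms yields the polarization relation
\[
[H(x_1), x_2] + [H(x_2), x_1] = 0 \quad \text{for all } x_1, x_2 \in \mc C_E^h.
\]
Expanding $[\widetilde H(x), x]$ for $x = x_1 + i x_2$ by sesquilinearity then gives
\[
[\widetilde H(x), x] \;=\; i\bigl([H(x_2), x_1] - [H(x_1), x_2]\bigr) \;=\; 2 i\, [H(x_2), x_1],
\]
which is purely imaginary, so $[S x, x] = -i[\widetilde H x, x] \in \mathbb R$ for every $x \in \mc C_E$, and $S$ is Hermitian.

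Theorem \ref{t11} in the separable case (respectively its perfect analogue from \cite{ach}) then provides self-adjoint $a, b \in \mc B(\mc H)$ with $S(x) = a x + x b$ for all $x \in \mc C_E$, whence $\widetilde H(x) = i(a x + x b)$ and $H(y) = i(a y + y b)$ for every $y \in \mc C_E^h$. Self-adjointness of $H(y)$ forces $(a+b)y + y(a+b) = 0$ for all $y \in \mc C_E^h$; testing this identity with the rank-one self-adjoint operator $y = \xi \otimes \xi$ for an arbitrary unit vector $\xi \in \mc H$ produces $((a+b)\xi) \otimes \xi + \xi \otimes ((a+b)\xi) = 0$, and applying this operator to $\xi$ (and then taking the inner product with $\xi$) forces $(a+b)\xi = 0$. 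Hence $b = -a$, and replacing $a$ by $-a$ delivers the claimed representation $H(y) = i(y a - a y)$. The main obstacle I foresee is the first step of the middle paragraph, namely producing a semi-inner product whose restriction to the self-adjoint part is genuinely real-valued; equivalently, showing that the real-linear isometry group $\{e^{tH}\}_{t \in \mathbb R}$ of $\mc C_E^h$ is compatible with the complex structure of $\mc C_E$. Once this compatibility is in place, the rest of the argument is a short sesquilinear calculation, an appeal to Theorem \ref{t11}, and a rank-one commutant identity.
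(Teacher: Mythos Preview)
Your reduction to Theorem \ref{t11} via complexification is natural, but the argument breaks at the polarization step, and the difficulty is more serious than the one you flag at the end. A Lumer--Giles semi-inner product is linear in the first variable but only \emph{homogeneous}, not additive, in the second. Thus from $[H(x_1+x_2),x_1+x_2]=0$ you may write
\[
[H(x_1),x_1+x_2]+[H(x_2),x_1+x_2]=0,
\]
but you cannot split $[\,\cdot\,,x_1+x_2]$ into $[\,\cdot\,,x_1]+[\,\cdot\,,x_2]$, so the identity $[H(x_1),x_2]+[H(x_2),x_1]=0$ does not follow. The same objection kills the ``sesquilinear'' expansion of $[\widetilde H(x),x]$ for $x=x_1+ix_2$: semi-inner products are simply not sesquilinear, and there is no compatible choice on a non-Hilbert $\mc C_E$ that makes them so. Equivalently, in the exponential picture, $e^{tH}$ is a surjective isometry of $\mc C_E^h$, but its $\mathbb C$-linear extension $e^{t\widetilde H}(x_1+ix_2)=e^{tH}x_1+ie^{tH}x_2$ has no reason to preserve $\|\cdot\|_{\mc C_E}$; proving that it does is tantamount to the theorem itself.

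The paper avoids this obstruction by never attempting a general polarization. Instead it works only with support functionals at very special points---rank-one projections $p=\eta\otimes\eta$ and carefully chosen finite sums $\sum\xi_jp_j$---where the functional is explicitly $y\mapsto\mathrm{Tr}(yz)$ for a concrete $z$, so that the needed identities can be verified by hand (Propositions \ref{p32}--\ref{p34}). From $H(\eta\otimes\eta)=\eta\otimes f+f\otimes\eta$ one builds $a\in\mc B(\mc H)$ with $H(x)=ax+xa^*$ on $\mc F(\mc H)^h$, extends by density or $\sigma(\mc C_E,\mc C_E^\times)$-continuity, and finally shows $a\in i\,\mc B(\mc H)^h$. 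Your final paragraph (forcing $b=-a$ from rank-one tests) is fine, but it only becomes relevant after one has $H(x)=ax+xa^*$, and getting there is precisely the work your complexification was meant to shortcut.
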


\section{Preliminaries}

Let $\ell_{\infty}$ (respectively,  $c_0$)  be the Banach space of bounded (respectively, converging to zero) sequences $\{\xi_n\}_{n=1}^\ii$ of complex numbers equipped with the norm $\|\{\xi_n\}\|_\ii=\sup\limits_{n \in \mathbb N} |\xi_n|$, where $\mathbb N$ is the set of natural numbers. If $2^{\mathbb N}$ is the $\sigma$-algebra of subsets of $\mathbb N $ and
$\mu(\{n\})=1$ for each $n\in\mathbb N$, then $(\mathbb N, 2^{\mathbb N},\mu)$ is a $\sigma$-finite measure space such that $\mathcal L_\ii(\mathbb N, 2^{\mathbb N},\mu)=\ell_{\infty}$ and
\[
\mathcal L_1(\mathbb N, 2^{\mathbb N},\mu)=\ell_1=\left \{\{ \xi_n \}_{n = 1}^\ii\su\mathbb C:\ \|\{ \xi_n \} \|_1
=\sum_{n=1}^\ii |\xi_n|<\ii\right \}\subset \ell_{\infty},
\]
where $\mathbb C$ is the field of complex numbers.

If $\xi=\{\xi_n\}_{n = 1}^\ii\in \ell_{\infty}$, then the {\it non-increasing rearrangement} $\xi^*:(0,\ii) \to (0,\ii)$ of $\xi$ is defined by
\[
\xi^*(t)=\inf\{\lb:\mu\{|\xi|>\lb\}\leq t\},\ \ t > 0,
\]
(see, for example, \cite[Ch.\,2, Definition 1.5]{bs}). As such, the non-increasing rearrangement of
a sequence $\{\xi_n\}_{n = 1}^\ii \in \ell_{\infty}$ can be identified with the sequence $\xi^*=\{\xi_n^*\}_{n=1}^\ii$, where
\[
\xi_n^*= \inf\left\{ \sup\limits_{n \notin F} |\xi_n |: F\su \mathbb N,\ |F|<n\right\}.
\]
If $\{\xi_n\}\in c_0$, then $\xi_n^* \downarrow 0$; in this case there exists a bijection $\pi:\mathbb N \rightarrow \mathbb N$ such that $|\xi_{\pi(n)}| = \xi_n^*$, $n\in\mathbb N$.

{\it Hardy-Littlewood-Polya partial order} in the space  $\ell_{\infty}$ is defined as follows:
\[
\xi=\{\xi_n\}\prec\prec\eta=\{\eta_n\}\ \ \Longleftrightarrow \ \ \sum_{n=1}^m \xi^*_n \leq \sum_{n=1}^m \eta^*_n \ \text{\ \ for all\ \ } \ m\in\mathbb N.
\]

A non-zero linear subspace $E\su \ell_{\infty}$ with a Banach norm $\|\cdot\|_E$ is called a {\it symmetric} ({\it fully symmetric}) sequence space if
\[
\eta \in E, \  \xi \in \ell_{\infty}, \  \xi^* \leq \eta^*\text{\ (resp.},\ \xi^*\prec \prec \eta^*) \ \Longrightarrow \ \xi \in E \text{\ \ and\ \ }\| \xi \|_E \leq \| \eta\|_E.
\]
Every fully symmetric sequence space is a symmetric sequence space. The converse is not true in general. At the same time, any separable symmetric sequence space  is a fully symmetric space.

If $(E,\|\cdot\|_E)$ is a symmetric sequence  space  and $E_h= \{\xi=\{\xi_n\}_{n = 1}^\ii  \in E : \xi_n \in \mathbb R \ \ \forall  \ \ n \in \mathbb N \}$, where  $\mathbb R$ is the field of real numbers, then $(E_h,\|\cdot\|_E)$ is a Banach lattice with respect to the natural partial order
\[
\{\xi_n\}_{n = 1}^\ii=\xi \leq\eta=\{\eta_n\}_{n = 1}^\ii \ \Longleftrightarrow \ \xi_n \leq \eta_n \ \ \text{for all} \ \ \ n \in \mathbb N,
\]
and, in addition,
\[
\|\xi\|_E=\|\,|\xi|\,\|_E = \|\xi^*\|_E \text{ \ \ for all\ \ }\xi\in E_h.
\]

Examples of fully symmetric sequence spaces are  $(\ell_{\infty},\|\cdot\|_\ii)$, \  $(c_0,\|\cdot\|_\ii)$ and
\[
\ell_p=\left\{\xi=\{\xi_n\}_{n=1}^\ii\in c_0:\ \|\xi\|_p=\left(\sum_{n=1}^\ii|\xi_n|^p\right)^{1/p}<\ii\right\},\ 1\leq p<\ii.
\]
For any symmetric sequence space $(E,\|\cdot\|_{E})$ the following continuous embeddings hold \cite[Ch.\,2, \S\,6, Theorem 6.6]{bs}:
$$
(\ell_1,\|\cdot\|_1)\su (E,\|\cdot\|_E) \subset (\ell_{\infty},\|\cdot\|_\ii).
$$
Besides, $\|\xi\|_E\leq \|\xi\|_1$ for all $\xi \in l_1$ and  $\|\xi\|_\ii\leq \|\xi\|_E$ for all $\xi \in E$.

If there is $\xi\in E\sm c_0$, then $\xi^*\geq\al\mathbf1$ for some $\al>0$, where $\mathbf1 = \{1,1,...\}$. Consequently, $\mathbf1\in E$ and $E =\ell_{\ii}$. Therefore, either $E\su c_0$ or $E=\ell_\ii$.

Now, let $(\mc H,(\cdot, \cdot))$ be a  complex infinite-dimensional separable Hilbert space, and let $(\mc B(\mc H),\|\cdot\|_\ii)$ be the $C^*$-algebra of bounded linear operators in $\mc H$.  Denote by $\mc K(\mc H)$
(respectively, $\mc F(\mc H)$) the two-sided ideal of compact (respectively, finite rank) linear operators in $\mc B(\mc H)$. It is well known that, for any proper two-sided ideal $\mc I\su\mc B(\mc H)$, we have $\mc F(\mc H)\su\mc I \su\mc K(\mc H)$ (see, for example, \cite[Proposition 2.1]{simon}).

Denote $\mc B_h(\mc H)=\{x \in\mc B(\mc H): x =x^{\ast}\}$, $\mc B_+(\mc H)=\{x \in\mc B(\mc H): x\ge 0\}$, and let
$\mathrm{Tr}:\mc B_+(\mc H)\to [0, \ii]$ be the {\it canonical trace} on $\mc B(\mc H)$, that is,
\[
\mathrm{Tr}(x)=\sum\limits_{j \in J} (x\varphi_j,\varphi_j), \ \ x\in\mc B_+(\mc H),
\]
where $\{\varphi_j\}_{j \in J}$ is an orthonormal basis in $\mc H$ (see, for example, \cite[Ch.\,7, E.\,7.5]{sz}).

Let $\mc P(\mc H)$ be the lattice of projections in $\mc H$. If $\mathbf 1$ is the identity of $\mc B(\mc H)$ and
$e\in\mc P(\mc H)$, we will write $e^\perp=\mathbf 1-e$.

Let $x\in\mc B(\mc H)$, and let $\{e_\lb\}_{\lb\ge 0}$ be the spectral family of projections for the absolute value
$|x|= (x^*x)^{1/2}$ of $x$, that is, $e_\lb=\{|x|\leq\lb\}$.
If $t>0$, then the {\it $t$-th generalized singular number} of $x$, or the {\it non-increasing rearrangement} of $x$,
is defined as
\[
\mu_t(x)=\inf\{\lb>0:\ \mathrm{Tr}(e_\lb^\perp)\leq t\}
\]
(see \cite{fk}).

A non-zero linear subspace  $X\su\mc B(\mc H)$ with a Banach norm $\|\cdot\|_X$ is called {\it symmetric} ({\it fully symmetric}) if the conditions
\[
x\in X, \ y\in\mc B(\mc H), \ \mu_t(y)\leq \mu_t(x)\text{ \ \ for all\ \ } t>0 \
\]
(respectively,
\[
x\in X, \ y\in\mc B(\mc H), \ \int\limits_0^s\mu_t(y)dt\leq\int\limits_0^s\mu_t(x)dt \text{ \ \ for all\ \ } s>0 \ \  (\text {writing} \  y \prec\prec x))
\]
imply that $y\in X$ and $\| y\|_X\leq \| x\|_X$.

The spaces $(\mc B(\mc H),\|\cdot\|_\ii)$ and $(\mc K(\mc H),\|\cdot\|_\ii)$ as well as the classical Banach two-sided ideals
\[
\mc C_p =\{x\in\mc K(\mc H) :\ \|x\|_p=\mathrm{Tr}(|x|^p)^{1/p}<\ii\}, \  1 \leq p<\ii,
\]
are examples of fully symmetric spaces.

It should be noted  that  for every symmetric space  $(X,\|\cdot\|_X) \su \mc B(\mc H)$ and all $x\in X$, $a, b\in\mc B(\mc H)$,
\[
\|x\|_X = \|\,|x|\,\|_X = \|x^*\|_X, \ \ axb \in X,\text{\ \ and\ \ }\|axb\|_X \leq \|a\|_\ii\|b\|_\ii\|x\|_X.
\]

\begin{rem}\label{r1}
If $X\su\mc B(\mc H)$ is a symmetric subspace and there exists a projection $e\in \mc P(\mc H)\cap X$ such that $\mathrm{Tr}(e)=\ii$, that is, $\dim e(\mc H)= \ii$, then $\mu_t(e)=\mu_t(\mathbf 1)= 1$ for every $t \in (0, \infty)$. Consequently,  $\mathbf 1 \in X$ and $X = \mc B(\mc H)$. If $X \neq \mc B(\mc H)$ and $x \in X$, then $e_\lb=\{|x|>\lb\}$ is a finite-dimensional projection, that is,
$\dim e_\lb (\mc H)<\ii$ for all $\lb>0$. This means that $x\in\mc K(\mc H)$, hence $X\su\mc K(\mc H)$. Therefore, either $X=\mc B(\mc H)$ or $X\su\mc K(\mc H)$.
\end{rem}

If $x\in\mc K(\mc H)$, then $|x|=\sum\limits_{n=1}^{m(x)} \mu(n,x) p_n$ (if $m(x)=\ii$, the series converges uniformly),
where $\{\mu(n,x)\}_{n=1}^{m(x)}$ is the set of singular values of $x$, that is, the set of eigenvalues of the compact operator $|x|$ in the decreasing order, and $p_n$ is the projection onto the eigenspace corresponding to $\mu(n,x)$. Consequently, the non-increasing rearrangement $\mu_t(x)$ of $x\in\mc K(\mc H)$ can be identified with the sequence $\{\mu(n,x)\}_{n=1}^\ii$, $\mu(n,x) \downarrow 0$ (if $m(x)<\ii$, we set $\mu(n,x)=0$ for all $n>m(x)$).

Let $(X,\|\cdot\|_X)\su \mc K(\mc H)$ be a symmetric space. Fix an orthonormal basis  $\{\varphi_n\}_{n=1}^\infty$ in $\mc H$, and denote by $p_n$ be the  projection on the  one-dimension linear subspace $\mathbb C\cdot\varphi_{n}\su\mc H$.  It is clear that the set
\[
E(X)=\left\{\xi=\{\xi_n\}_{n = 1}^\ii\in c_0: \ x_\xi =\sum\limits_{n=1}^\ii\xi_np_n \in X\right\} \]
(the series converges uniformly),
is a symmetric sequence space with respect to the norm $\|\xi\|_{E(X)} = \| x_\xi\|_X$. Consequently, each symmetric subspace $(X,\|\cdot\|_X)\su\mc K(\mc H)$ uniquely generates a symmetric sequence space $(E(X), \|\cdot\|_{E(X)})\su c_0$. The converse is also true: every symmetric sequence space $(E,\|\cdot\|_E)\su c_0$ uniquely generates a symmetric space $(\mc C_E,\|\cdot\|_{\mc C_E})  \su \mc K(\mc H)$ by the following rule (see, for example, \cite[Ch.\,3, Section 3.5]{lsz}):
\[
\mc C_E =\{x\in\mc K(\mc H):\, \{\mu(n,x)\}\in E\},\ \ \|x\|_{\mc C_E} = \|\{\mu(n,x)\}\|_{E}.
\]
In addition,
\[
E(\mc C_E)=E, \ \|\cdot\|_{E(\mc C_E)}=\|\cdot\|_E, \ \mc C_{E(\mc C_E)}=\mc C_E, \ \|\cdot\|_{\mc C_{E(\mc C_E)} }=\|\cdot\|_{\mc C_E}.
\]

We will call the pair $(\mc C_E,\|\cdot\|_{\mc C_E})$ a {\it Banach ideal of compact operators} (cf. \cite[Ch. III]{gohb}, \cite[Ch. 1, \S 1.7]{simon}). It is known that
$(\mc C_p,\|\cdot\|_p)=(\mc C_{l_p},\|\cdot\|_{\mc C_{l_p}})$ for all $1\leq p<\ii$ and
$(\mc K(\mc H),\|\cdot\|_\ii)= (\mc C_{c_0},\|\cdot\|_{\mc C_{c_0}})$. In addition,   $\mathcal C_1 \subset \mathcal C_E \subset \mathcal K(\mathcal H)$ \ and \ $\|x\|_{\mathcal C_E} \leq  \|x\|_1, \ \ \|y\|_{\infty}\leq  \|y\|_{\mathcal C_E}$ for all $x \in \mathcal C_1, \ y \in \mathcal C_E.$ Note also that every separable Banach ideal of compact operators is a fully symmetric ideal.

If $(E, \|\cdot\|_{E})$ is a symmetric sequence space  (respectively, $(\mathcal C_E, \|\cdot\|_{\mathcal C_E})$ is  a Banach symmetric ideal), then the K\"{o}the  dual $E^\times$ (respectively, $\mathcal  C_E^\times$) is defined as
\[ E^\times=\{\xi=\{\xi_n\}_{n=1}^{\infty} \in \ell_{\infty}\,: \ \, \xi\eta=\{\xi_n\eta_n\}_{n=1}^{\infty}\in \ell_1 \ \ \text{for all} \ \ \eta =\{\eta_n\}_{n=1}^{\infty} \in E\},\]
\[ (\text{respectively}, \ \ \mathcal  C_E^\times=\{x\in \mathcal  B(\mathcal H)\,: \ \, xy\in \mathcal  C_1 \ \ \text{for all} \ \ y\in \mathcal  C_E\}),\]
and
\[ \|\xi\|_{E^\times}=\sup\limits\{\sum\limits_{n=1}^{\infty}|\xi_n \eta_n|: \eta=\{\eta_n\}_{n=1}^{\infty} \in E, \ \|\eta\|_{E}\leq 1  \}, \ \xi \in E^\times, \]
\[ (\text{respectively}, \ \ \|x\|_{\mathcal C_E^\times}=\sup\limits\{\mathrm{Tr}\big(|xy|\big): y\in \mathcal C_E, \ \|y\|_{ \mathcal C_E}\leq 1  \}, \  x \in \mathcal  C_E^\times). \]

It is known that $(E^\times, \|\cdot\|_{E^\times})$ is a  symmetric sequence space  \cite[Ch. II, \S 4, Theorems 4.3, 4.9]{kps} \ and  $\ell_1^\times = \ell_{\infty}$. In addition, if $E\neq \ell_1$ then $E^\times \subset c_0$. Therefore, if $E\neq \ell_1$, the space $(\mathcal  C_E^\times, \|\cdot\|_{\mathcal  C_E^\times})$ is a   symmetric ideal of compact operators.

A  Banach symmetric ideal  $(\mathcal  C_E, \|\cdot\|_{\mathcal C_E})$  is said to be {\it perfect} if $\mathcal C_E = \mathcal C_{E}^{\times\times}$ (see, for example, \cite{garling}). It
 is clear that $\mathcal C_E$  is perfect if and only if $E = E^{\times\times}$.

A symmetric  sequence space $(E, \|\cdot\|_{E})$
 (a   Banach symmetric ideal $(\mathcal  C_E, \|\cdot\|_{\mathcal C_E})$)  is said to possess
{\it Fatou property} if the conditions
\[
 0 \leq \xi_{k }\leq \xi_{k+1}, \  \xi_{k}\in E \ \ \text{(respectively}, \ 0 \leq \ x_{k }\leq x_{k+1}, \   x_{k}\in \mathcal C_E) \ \ \text{for all} \ \ k \in \mathbb N
\]
and \ $\sup\limits_{k\geq 1} \| \xi_{k}\|_E<\infty$ \ (respectively,  $\sup\limits_{k\geq 1} \| x_{k}\|_{\mathcal{C}_E}<\infty)$ imply that there exists an element $\xi \in E$ \ (respectively, $x \in \mathcal{C}_E)$ such that $\xi_{k}\uparrow \xi$ \ and  \ $\| \xi\|_E=\sup \limits_{k\geq 1} \| \xi_{k}\|_E$ \ (respectively, \ $x_{k}\uparrow x$ \ and \  $\| x\|_{\mathcal C_E}=\sup\limits_{k\geq 1} \| x_{k}\|_{\mathcal C_E})$.

It is known that $(E, \|\cdot\|_{E})$ (respectively, $(\mathcal C_E, \|\cdot\|_{\mathcal C_E})$   has the Fatou property if and only if $E = E^{\times\times}$ \cite[Vol. II, Ch. 1, Section a]{lt} (respectively,  $\mathcal C_E= \mathcal C_E^{\times\times}$ \cite[Theorem 5.14]{ddp}). Therefore $(\mathcal C_E, \|\cdot\|_{\mathcal C_E})$ is a perfect Banach symmetric ideal if and only if
$(\mathcal C_E, \|\cdot\|_{\mathcal C_E})$  has the Fatou property. Note  that every perfect Banach symmetric ideal is a fully symmetric  ideal.

If $y\in \mathcal C_E^{\times}$, then a linear functional $f_y(x)= \mathrm{Tr}\big(x\cdot y\big), \ x \in  \mathcal C_E$,  is continuous on  $(\mathcal C_E, \|\cdot\|_{\mathcal C_E})$, in addition, $\|f_y\|_{\mathcal C_E^{\ast}} = \|y\|_{\mathcal C_E^{\times}}$,   where $(\mathcal C_E^{\ast},\|\cdot\|_{\mathcal C_E^{\ast}})$ is the dual of the Banach space   $(\mathcal C_E, \|\cdot\|_{\mathcal C_E})$ (see, for example, \cite{garling}).
Identifying an element $y \in  \mathcal C_E^{\times}$ and the linear functional $f_y$, we may assume that $\mathcal C_E^{\times}$ is a closed linear subspace in $\mathcal C_E^{\ast}$.
Since $\mathcal F(\mathcal H) \subset \mathcal C_E^{\times}$, it follows that $\mathcal C_E^{\times}$ is a total subspace in $\mathcal C_E^{\ast}$, that is, the conditions $x \in \mathcal C_E, \ f(x)=0$ for all $f \in \mathcal C_E^{\times}$ imply  $x=0$. Thus, the weak topology $\sigma(\mathcal C_E, \mathcal C_E^{\times})$ is a Hausdorff topology, in addition   $\mathcal F(\mathcal H)$ (respectively, $\mathcal F(\mathcal H)^h$) is $\sigma(\mathcal C_E, \mathcal C_E^{\times})$-dense in $\mathcal C_E$ (respectively, in $\mathcal C_E^h$).

\section{Skew-Hermitian operators in $\mathcal C_E^h$}

{\it A semi-inner product} on a real linear space $X$ is a  form $[\cdot,\cdot]\colon X\times X\to \mathbb R$ which satisfies

$(i)$. $[\alpha x+ y,z] = \alpha\cdot[x,z]+[y,z]$ for all $\alpha \in \mathbb R$ \ and \  $ x, y, z \in X$;

$(ii)$. $[x, \alpha y] = \alpha\cdot[x,y]$ for all $\alpha \in \mathbb R$ \ and \  $ x, y \in X$;

$(iii)$. $[x,x] \geq 0$ for all $ x \in X$ \ and \ $[x,x] =0$ implies that $x = 0$;

$(iv)$. $| [x,y] |^2 \leq [x,x]\cdot [y,y]$ for all $ x, y \in X$. \\
(see, for example, \cite[Ch. 2, \S 1]{dragomir}).

The function $\|x\| = \sqrt{[x,x]}$ is the norm  on a linear space $X$. Conversely, if $(X,\|\cdot\|_X)$ is a  normed real linear space, then there exists  semi-inner product $[\cdot,\cdot]$ on $X$ {\it compatible with the norm  $\|\cdot\|_X$}, that is, $\|x\|_X = \sqrt{[x,x]}$ for all $ x \in X$ \cite[Ch. 2, \S 1]{dragomir}. In particular, the semi-inner product, which is compatible with the norm  $\|\cdot\|_X$,  can be defined using the equation $[x,y]= \varphi_y (x)$, where $\varphi_y\in X^*$, $\|\varphi_y\|_{X^*} = \|y\|_X$ and $\varphi_y (y) = \|y\|_X^2$ (such functional  is called a \emph{support functional} at $y \in X$) (\cite[Ch. 2, \S 1, Theorem 10]{dragomir}.

Let $(X,\|\cdot\|_X)$ be a real Banach  space, and let  $[\cdot,\cdot]$ be a semi-inner product on  $X$ which is compatible with the norm  $\|\cdot\|_X$. A linear bounded operator $H\colon X\to X$ is said to be {\it skew-Hermitian}, if $[H(x),x]=0$ for all $x\in X$ (\cite{flem2}, Ch. 9, \S 4), in particular, $\varphi_{x} (H(x))=0$ for every $x\in X$.

The following Proposition is well known (\cite[ Ch. 9, \S 4, Proposition 9.4.2]{flem2}).

\begin{pro}\label{p31}
Let $(X,\|\cdot\|_X)$ be a real Banach space and let $H\colon X\to X$ be a skew-Hermitian operator. If \ $V\colon X\to X$ is a surjective linear isometry then an  operator  $V\cdot H\cdot V^{-1}$ is a skew-Hermitian.
\end{pro}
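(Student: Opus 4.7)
The plan is to work with the support-functional realisation of the semi-inner product recalled just above the proposition: every semi-inner product on $X$ compatible with $\|\cdot\|_X$ arises as $[x,y]=\varphi_y(x)$, where for each $y\in X$ one chooses a support functional $\varphi_y\in X^*$ with $\|\varphi_y\|_{X^*}=\|y\|_X$ and $\varphi_y(y)=\|y\|_X^2$. Under this identification, the skew-Hermitian condition $[H(x),x]=0$ becomes $\varphi_x(H(x))=0$ for every $x\in X$. Thus, to prove that $V\cdot H\cdot V^{-1}$ is skew-Hermitian it suffices to exhibit, for each $y\in X$, a support functional $\psi_y$ at $y$ satisfying $\psi_y\bigl((VHV^{-1})(y)\bigr)=0$.

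The construction is forced by the geometry. Fix $y\in X$ and put $z:=V^{-1}y$; since $V$, and hence $V^{-1}$, is a surjective linear isometry we have $\|z\|_X=\|y\|_X$ and $\|V^{-1}\|=1$. Using that $H$ is skew-Hermitian, pick a support functional $\varphi_z$ at $z$ with $\varphi_z(H(z))=0$, and set
\[
\psi_y:=\varphi_z\circ V^{-1}.
\]
A routine check shows that $\psi_y$ is a support functional at $y$: the bound $\|\psi_y\|_{X^*}\le\|\varphi_z\|_{X^*}\|V^{-1}\|=\|y\|_X$ is immediate from $V^{-1}$ being an isometry, while $\psi_y(y)=\varphi_z(V^{-1}y)=\varphi_z(z)=\|z\|_X^2=\|y\|_X^2$ forces the reverse inequality, so $\|\psi_y\|_{X^*}=\|y\|_X$.

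It then remains only to compute
\[
\psi_y\bigl((VHV^{-1})(y)\bigr)=\varphi_z\bigl(V^{-1}(VHV^{-1})(y)\bigr)=\varphi_z(H(z))=0,
\]
which gives the required skew-Hermitian identity $[VHV^{-1}(y),y]=0$ for the semi-inner product associated with the family $\{\psi_y\}_{y\in X}$. I do not foresee a genuine obstacle here: the only subtle point is to recognise that the semi-inner product witnessing the skew-Hermiticity of $V\cdot H\cdot V^{-1}$ need not be the one originally fixed on $X$ — it is obtained from the original by pulling support functionals back along $V^{-1}$, which is legitimate because skew-Hermiticity is defined relative to \emph{some} compatible semi-inner product.
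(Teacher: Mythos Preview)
Your argument is correct. The paper itself does not prove this proposition at all: it simply cites \cite[Ch.~9, \S 4, Proposition~9.4.2]{flem2} and moves on, so there is no ``paper's own proof'' to compare against.

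Your pull-back construction $\psi_y=\varphi_{V^{-1}y}\circ V^{-1}$ is the natural one; note that it can be written globally as $[x,y]':=[V^{-1}x,V^{-1}y]$, from which the semi-inner product axioms for $[\cdot,\cdot]'$ and its compatibility with $\|\cdot\|_X$ are immediate. The one point you flag at the end is the only place a reader might hesitate: you produce a \emph{different} compatible semi-inner product for which $VHV^{-1}$ is skew-Hermitian, and rely on the fact that the notion does not depend on the choice. That independence is standard (it follows, for instance, from the characterisation $H$ skew-Hermitian $\iff$ $\|e^{tH}\|=1$ for all $t\in\mathbb R$, or from the fact that the spatial numerical range has the same closed convex hull for every compatible semi-inner product), but it might be worth stating explicitly rather than leaving implicit. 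Incidentally, the exponential characterisation gives a one-line alternative proof: $e^{t\,VHV^{-1}}=V\,e^{tH}\,V^{-1}$ is a composition of isometries for every $t$, hence $VHV^{-1}$ is skew-Hermitian.
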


Let $(\mathcal C_E, \|\cdot\|_{\mathcal C_E})$ be a separable or perfect Banach symmetric  ideal, \  $\mathcal C_E \neq \mathcal C_2$. Let $H\colon \mathcal C_E^h\to \mathcal C_E^h$  be a  skew-Hermitian operator. We want to prove Theorem \ref{t12}, i.e. we will show that there exists $a\in  \mathcal B(H)^h$ such that $H(x)=i(xa - ax)$ for  all $x\in \mathcal C_E^h$. To solve this problem, we use a modification of the the original proof of Sourour Theorem 1 \cite{sourour}.

For vectors $\xi, \eta \in \mathcal H$, denote by  $\xi\otimes\eta $ the rank one operator on $\mathcal H$ defined by the equality $ (\xi\otimes\eta)(h)=(h,\eta)\xi, \ h \in \mathcal H$. It is easily seen $$\langle x, \xi\otimes \eta \rangle: = \mathrm{Tr}((\eta\otimes \xi)\cdot x)=(x(\eta),\xi)$$ for any $x\in \mathcal B(\mathcal H)^h$ and $\xi, \eta \in \mathcal H$.
If $y=\xi\otimes \xi, \ \|\xi\|_{\mathcal H} = 1$, then  $y$ is an one dimensional projection on $\mathcal H$ \ and \ $\|y\|_{\mathcal  C_E} = \|y\|_\infty =1$. Thus for a linear functional $$f_y(x):= \langle x, y \rangle= \mathrm{Tr}(y^*x), \ x\in \mathcal C_E^h,$$  we have that \
$$f_y(y)=\mathrm{Tr}(y^2)=\mathrm{Tr}(y)=(\xi,\xi)=1=\|y\|_{\mathcal C_E}^2.$$ \
In addition, if $x\in \mathcal C_E^h$ and $\|x\|_{\mathcal C_E}\leq 1$ then \
$$|f_y(x)| = |\mathrm{Tr}(yx)|=|(x(\xi),\xi)|\leq  \|x\|_{\infty}\leq \|x\|_{\mathcal C_E} \leq 1.$$
Consequently, $\|f_y\|_{(\mathcal C_E^h)^*} =1 = \|y\|_{\mathcal C_E}$. This means that   $f_y$ is a support functional at $y \in \mathcal C_E^h$, and $[x,y]= f_y(x)$ is a semi-inner product  on $\mathcal C_E^h$ compatible with the norm  $\|\cdot\|_{\mathcal C_E^h}$ \ (\cite[Ch. 2, \S 1, Theorem 10]{dragomir}.

\begin{pro}\label{p32} If $\xi, \eta\in \mathcal H, \ (\eta,\xi)=0$, then $\langle H(\eta\otimes \eta), \xi\otimes \xi\rangle = 0$.
\end{pro}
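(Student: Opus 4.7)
The plan is to combine three ingredients: skew-Hermiticity applied to rank-one projections in $\mathrm{span}(\eta,\xi)$, a support-functional computation at the rank-two self-adjoint operator $r_1 := \eta\otimes\xi + \xi\otimes\eta$, and a support-functional computation at the diagonal rank-two element $\alpha p + \beta q$, where $p := \eta\otimes\eta$ and $q := \xi\otimes\xi$. We may normalize $\|\eta\|=\|\xi\|=1$, and set $r_2 := i(\xi\otimes\eta - \eta\otimes\xi)$.

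For each unit vector $u = \alpha\eta + \beta\xi$, the rank-one projection $P_u = u\otimes u$ decomposes as
\[
P_u = |\alpha|^2 p + |\beta|^2 q + \Re(\alpha\bar\beta)\,r_1 - \Im(\alpha\bar\beta)\,r_2.
\]
By the discussion preceding the Proposition, $x\mapsto \langle x, P_u\rangle$ is a support functional at $P_u$, so skew-Hermiticity forces $\langle H(P_u), P_u\rangle = 0$. Expanding this identity in the Bloch-sphere parameters and extracting independent Fourier modes yields antisymmetry relations such as $\langle H(p), r_j\rangle + \langle H(r_j), p\rangle = 0$ and $\langle H(q), r_j\rangle + \langle H(r_j), q\rangle = 0$ for $j=1,2$, together with the key identity
\[
\langle H(p), q\rangle + \langle H(q), p\rangle + \langle H(r_1), r_1\rangle = 0, \qquad (\ast)
\]
and its analogue with $r_2$ in place of $r_1$, so that $\langle H(r_1),r_1\rangle = \langle H(r_2),r_2\rangle$.

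The next step is to show $\langle H(r_1),r_1\rangle = 0$. Since $r_1$ has singular values $(1,1,0,\ldots)$ and any symmetric sequence space $E$ satisfies $\|\chi_{\{1,2\}}\|_E\,\|\chi_{\{1,2\}}\|_{E^\times} = 2$ (the dual norm of an indicator being attained at a proportional vector by a standard symmetry argument maximizing $\eta_1+\eta_2$ on the unit ball of $E$), one directly verifies that $y := (\|r_1\|_{\mathcal{C}_E}^2/2)\,r_1 \in \mathcal{C}_E^\times$ is a support functional at $r_1$. Skew-Hermiticity then gives $\mathrm{Tr}(y\cdot H(r_1))=0$, i.e.\ $\langle H(r_1), r_1\rangle = 0$, and substituting into $(\ast)$ produces
\[ \langle H(p), q\rangle + \langle H(q), p\rangle = 0. \qquad (\ast\ast) \]

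Finally, we invoke $\mathcal{C}_E \neq \mathcal{C}_2$: since $E \neq \ell_2$, the norm on $E$ restricted to $2$-dimensional supports is not proportional to the $\ell_2$-norm, so there exist $\alpha \neq \beta$ in $\mathbb{R}_+$ and a support functional $(\gamma,\delta)\in E^\times$ at $(\alpha,\beta,0,\ldots)\in E$ with $(\gamma,\delta)$ not proportional to $(\alpha,\beta)$. Then $y := \gamma p + \delta q \in \mathcal{C}_E^\times$ is a support functional at $x := \alpha p + \beta q \in \mathcal{C}_E^h$, and skew-Hermiticity on $x$ yields $\alpha\delta\,\langle H(p),q\rangle + \beta\gamma\,\langle H(q),p\rangle = 0$. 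Combined with $(\ast\ast)$, this becomes $(\alpha\delta-\beta\gamma)\,\langle H(p),q\rangle = 0$, forcing $\langle H(p),q\rangle = 0$. The main obstacle is precisely this final step: one must confirm that the hypothesis $\mathcal{C}_E\neq\mathcal{C}_2$ indeed produces a non-proportional support functional on some two-dimensional supported vector of $E$. If the restriction of $E$ to $2$-dimensional supports happens to be Euclidean while $E$ differs from $\ell_2$ only on higher-dimensional supports, the same linear-algebra extraction must instead be carried out on $x = \sum_{k=1}^n \alpha_k p_k$ for orthogonal rank-one projections $p_1,\ldots,p_n$ extending $p,q$, for $n$ large enough that the $n$-dimensional restriction of $E$ is no longer Euclidean.
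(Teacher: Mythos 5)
Your route to the antisymmetry relation $\langle H(p),q\rangle+\langle H(q),p\rangle=0$ (your $(\ast\ast)$, the paper's identity (2)) is correct and genuinely different from the paper's: the paper gets it from Fleming--Jamison's Lemma 9.2.7, which produces a diagonal support functional at $\xi_1p+\xi_2q$ with strictly positive coefficients, whereas you get it from skew-Hermiticity at \emph{all} rank-one projections $P_u$, $u\in\mathrm{span}(\eta,\xi)$, together with the explicit trace-duality support functional $\bigl(\|r_1\|_{\mathcal C_E}^2/2\bigr)r_1$ at $r_1$; that verification is sound, since under the standing hypotheses (separable or perfect) $E$ is fully symmetric, so indeed $\|\chi_{\{1,2\}}\|_E\,\|\chi_{\{1,2\}}\|_{E^\times}=2$. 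Like the paper, you use the (standard, but implicit) fact that skew-Hermiticity may be tested against any choice of support functionals, so this is not an extra gap.

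The decisive final step, however, has a genuine gap. The assertion that $E\neq\ell_2$ forces the norm of $E$ on vectors of support at most $2$ to be non-Euclidean is false: for instance $\|\xi\|_E=\max\{\|\xi\|_2,\ \|\xi\|_1/\sqrt2\}$ is a separable symmetric sequence space different from $\ell_2$ whose norm coincides with $\|\cdot\|_2$ on every vector with at most two nonzero coordinates. You acknowledge this possibility, but the proposed repair --- run ``the same linear-algebra extraction'' on $x=\sum_{k=1}^n\alpha_k p_k$ in the first non-Euclidean dimension $n$ --- is not the same extraction and is not carried out. For $n\ge3$ the single identity $0=\sum_{j,k}\alpha_j\gamma_k\langle H(p_j),p_k\rangle$ contains all off-diagonal entries $\langle H(p_j),p_k\rangle$, including those with $j,k\ge3$, and these cannot be assumed to vanish: their vanishing is exactly the statement being proved, applied to the pairs $(\eta_j,\eta_k)$. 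Isolating $\langle H(p_1),p_2\rangle$ is where the paper's real work lies: the symmetrized operator $H_0=\tfrac12(H-uHu)$ built from the unitary swapping $\eta_1,\eta_2$, the comparison of the support-functional identities at $(x,y)$ and at the sign-flipped pair $(\tilde x,\tilde y)$, and Lemma 5.4 of \cite{ac}, which supplies in dimension $n$ a unit vector and a support functional whose first two coordinates are non-proportional. None of this machinery (nor the existence of that non-proportional support functional, which you assert only in the two-dimensional form, where it fails) appears in your proposal. A completion along your own lines is possible --- prove your antisymmetry relation for every orthogonal pair, then average the identity over all sign changes $\alpha_k\mapsto\epsilon_k\alpha_k$, $\gamma_k\mapsto\epsilon_k\gamma_k$, $k\ge3$, which kills every term except $(\alpha_1\gamma_2-\alpha_2\gamma_1)\langle H(p_1),p_2\rangle$ --- but even this still needs the cited lemma to guarantee $\alpha_1\gamma_2\neq\alpha_2\gamma_1$, and it is not what your text says.
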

\begin{proof}
We can assume that  $\|\eta\|_{\mathcal H} = \|\xi\|_{\mathcal H}=1$. Since \ $p=\eta\otimes \eta$ is one dimensional projections and $H$ is a skew-Hermitian operator, it follows that
\begin{equation}\label{e13}
0=[H(p),p]= f_p(H(p))=\langle  H(p), p\rangle.
\end{equation}
By Lemma 9.2.7 (\cite[Ch. 9, \S 9.2]{flem2}, see also the proof of Lemma 11.3.2 \cite[Ch. 9, \S 11.3]{flem2}), there exists a  vector $\xi=\{\xi_1,\xi_2\}\in (\mathbb R^2,\|\cdot\|_E), \ \xi_1>0,\xi_2>0, \ \|\xi\|_E=1$, such that the  functional \ $f(\{\eta_1,\eta_2\})=\eta_1\xi_1+\eta_2\xi_2, \ \{\eta_1,\eta_2\}\in\mathbb R^2,$ is a  support functional at \ $\xi$ \ for space \ $(\mathbb R^2,\|\cdot\|_E)$.

Let us show  that the linear functional $$\varphi (y)=\langle y,x \rangle, \ y \in \mathcal C_E^h, \  x=\xi_1 p+\xi_2 q,$$
is a support functional at $x$ \ for \ $(\mathcal C_E^h,\|\cdot\|_E)$.

Since $f$ is support functional at $\xi$ \ for \ $(\mathbb R^2,\|\cdot\|_E)$ \ and \ $\|\xi\|_E=1$, it follows that $\xi_1^2+\xi_2^2 =f(\{\xi_1,\xi_2\})=f(\xi)= \|\xi\|_E^2=1.$
Furthermore, by  $\|f\| = \|\xi\|_E=1$, we have that \ $|f(\{\eta_1,\eta_2\})|=|\xi_1\eta_1+\xi_2\eta_2| \leq 1$ \ for every $\{\eta_1,\eta_2\}\in\mathbb R^2$  \ with $\|\{\eta_1,\eta_2\}\|_E \leq 1$.

Further, by  Lemma 4.1 \cite[Ch. II, \S 4]{gohb}, we have
$$|(y(\eta),\eta)|\leq \mu(1,y), \ |(y(\xi),\xi)|\leq \mu(1,y), \ |(y(\eta),\eta)|+ |(y(\xi),\xi)|\leq \mu(1,y)+\mu(2,y),
$$
that is, $\{(y(\eta),\eta),(y(\xi),\xi)\} \prec\prec \{\mu(1,y),\mu(2,y)\}$. Since $(E, \|\cdot\|_{E})$ is a fully symmetric sequence space, it follows that
$$\|\{(y(\eta),\eta),(y(\xi),\xi)\}\|_E \leq \|\{\mu(1,y),\mu(2,y)\}\|_E\leq \|y\|_{\mathcal C_E}.$$
Consequently, if  $y\in \mathcal C_E^h$ \ and \ $\|y\|_{\mathcal C_E} \leq 1$, then
$$|\varphi(y)|= |\langle y,x \rangle| = |\xi_1\mathrm{Tr}(py)+\xi_2\mathrm{Tr}(qy)|=
|f(\{(y(\eta),\eta),(y(\xi),\xi)\})|\leq 1,$$
that is, $\|\varphi\|_{(\mathcal C_E^h,\|\cdot\|_E)^*} \leq 1$.
Since $\|x\|_{\mathcal C_E}=\|\xi\|_E=1$ and
$$\varphi (x)=\langle x,x \rangle=\langle \xi_1 p+\xi_2 q, \xi_1 p+\xi_2 q \rangle=
\mathrm{Tr}(\xi_1 p+\xi_2 q)(\xi_1 p+\xi_2 q) )=\xi_1^2+\xi_2^2 =1,$$
it follows that $\|\varphi\|_{(\mathcal C_E^h,\|\cdot\|_E)^*} = 1 = \|x\|_{\mathcal C_E}$ \ and \ $\varphi (x)=\|x\|_{\mathcal C_E}^2$.
This means that $\varphi$ is a support functional at $x$ \ for space \ $(\mathcal C_E^h,\|\cdot\|_{\mathcal C_E})$.

Hence,
$$0=[H(x),x]=\varphi(H(x))=\langle H(x),x \rangle =\langle \xi_1 H(p)+\xi_2 H(q), \xi_1 p+\xi_2 q \rangle.$$
Since $\langle H(p), p\rangle=\langle H(q), q\rangle =0$ (see  (\ref{e13})), it follows that
\begin{equation}\label{e23}
\langle H(p),q \rangle=-\langle H(q),p \rangle.
\end{equation}

 We extend $\eta_1=\eta, \ \eta_2=\xi$  up to an orthonormal basis $\{\eta_i\}_{i=1}^{\infty}$, and let $p_i = \eta_i\otimes \eta_i$. Now we replace our operator $H$ with another skew-Hermitian operator $H_0$.
Let $u$ be a unitary operator such that  $u(\eta_1)=\eta_2, u(\eta_2)=\eta_1$ and $u(\eta_k)=\eta_k$ if $k\neq 1,2$. It is clear that $u^*=u^{-1}=u, \ up_1u=p_2, \  up_2u=p_1, \ up_iu=p_i, \ i\neq 1,2$, and $V(x)=uxu^*=uxu$ is an surjective isometry on $\mathcal C_E^h$, in addition,   $V^{-1} = V$.

By Proposition \ref{p31}, a linear operator $H_1 = VHV^{-1}$ is a skew-Hermitian operator, in particular, $\langle H_1(p_k), p_k \rangle =0$ for all $k\in\mathbb N$ (see  (\ref{e13})).

If $i,j\neq 1,2$,  then
\[
\langle H_1(p_i), p_j \rangle = \langle u H(p_i) u, p_j \rangle =\mathrm{Tr}(p_j u H(p_i) u)=(u H(p_i) u (\eta_j),\eta_j)=\]
\[ =(H(p_i) u (\eta_j), u^*(\eta_j))=(H(p_i)(\eta_j),\eta_j)=\mathrm{Tr}(p_j H (p_i))=\langle H(p_i),p_j \rangle.\]
If $i=1, \ j\neq 1,2$, then
\[\langle H_1(p_1), p_j \rangle = \langle uH(p_2)u, p_j \rangle =\mathrm{Tr}(p_j u H(p_2) u)=(u H(p_2) u(\eta_j),\eta_j)=\]
\[=(H(p_2)u(\eta_j), u^*(\eta_j))=(H(p_2)(\eta_j),\eta_j)=\mathrm{Tr}(p_jH(p_2))=\langle H(p_2),p_j \rangle.\]
Similarly, we get the following equalities

$(i)$. $\langle H_1(p_2), p_j \rangle=\langle H(p_1), p_j \rangle$ if  $i=2, \ j\neq 1,2$;

$(ii)$. $\langle H_1(p_i), p_1 \rangle =\langle H(p_i), p_2 \rangle$ if  $j=1, \ i\neq 1,2$;

$(iii)$. $\langle H_1(p_1), p_2 \rangle =\langle H(p_2), p_1 \rangle$ if  $i=1, \ j=2$;

$(iv)$. $\langle H_1(p_2), p_1 \rangle =\langle H(p_1), p_2 \rangle$ if   $i=2, \ j=1$.

It is clear that $H_0=\frac{1}{2} (H-H_1)$ is a skew-Hermitian operator, and if \ $i,j\neq 1,2$, \ then \ $\langle H_0(p_i), p_j \rangle = \frac{1}{2} (\langle H(p_i), p_j \rangle - \langle H_1(p_i), p_j \rangle) = 0$. Similarly, if $i=1, \ j\neq 1,2$ (respectively, $i=2, \ j\neq 1,2$) we get $\langle H_0(p_1), p_j \rangle = \frac{1}{2}(\langle H(p_1), p_j \rangle -\langle H(p_2), p_j \rangle)$ (respectively, $\langle H_0(p_2), p_j \rangle =\frac{1}{2}(\langle H(p_2), p_j \rangle -\langle H(p_1), p_j \rangle)$), that is, $\langle H_0(p_1), p_j \rangle+\langle H_0(p_2), p_j \rangle = 0$ in the case $j\neq 1,2$.

Similarly,  $\langle H_0(p_j), p_1 \rangle+\langle H_0(p_j), p_2 \rangle = 0$ if $j\neq 1,2$.
Since
$$\langle H_0(p_1), p_2 \rangle = \frac{1}{2}(\langle H(p_1), p_2 \rangle -\langle H(p_2), p_1 \rangle), \ \langle H(p_1), p_2 \rangle = -\langle H(p_2), p_1 \rangle \ (see (\ref{e23})),
$$
it follows that   $\langle H_0(p_1), p_2 \rangle = \langle H(p_1), p_2 \rangle$. Similarly, we get that $\langle H_0(p_2), p_1 \rangle = - \langle H(p_1), p_2 \rangle$. Finally, since $H_0$ is a skew-Hermitian operator, we have $\langle H_0(p_k), p_k \rangle =0$ for all $k\in\mathbb N$ (see (\ref{e13})).

Let $n$ be the smallest natural number such that the norm $\|\cdot\|_E$ is not Euclidian on $\mathbb R^n$. Then there  exist  (see,  \cite[Lemma 5.4]{ac}) linear independent vectors $\xi=(\xi_1,\xi_2,\ldots,\xi_n), \ \eta = (\eta_1,\eta_2,\ldots, \eta_n)\in\mathbb R^n, \ \|\xi\|_E = 1$, such that
\begin{equation}\label{e33}
\|\xi\|_E= \|f_\eta\|_{E^*}=f_\eta(\xi)=1,
\end{equation}
where $f_\eta(\zeta)=\sum\limits_{i=1}^n \zeta_i \eta_i, \ \zeta=(\zeta_1,\zeta_2,\ldots,\zeta_n) \in \mathbb R^n$. By rearranging the coordinates we may assume that $\xi_1\eta_2\neq \xi_2\eta_1$.

Let $x=\sum\limits_{j=1}^n \xi_j p_j$, \ $y=\sum\limits_{j=1}^n \eta_j p_j$,  and let $\varphi_y(z)=\langle z, y\rangle= \sum\limits_{j=1}^n \eta_j\cdot\mathrm{Tr}(p_j z), \ z \in \mathcal C_E^h$.

Let us show  that  $\varphi_y$ \ is a support functional at $x$ \ for \ $(\mathcal C_E^h,\|\cdot\|_E)$. \
Since $\|f_{\eta}\|_{E^*}=1$ (see (\ref{e33})), it follows that $|f_{\eta}(\zeta)|=|\sum\limits_{i=1}^n \eta_i\zeta_i| \leq 1$ for every $\zeta=\{\zeta_i\}_{i=1}^n \in \mathbb R^n$ with  \ $\|\zeta\|_E \leq 1$. Note that $\|x\|_{\mathcal C_E}=\|\xi\|_E=1$.

We should show that $\|\varphi_y\|=\|x\|_{\mathcal C_E}=1$ and $\varphi_y (x)=\|x\|_{\mathcal C_E}^2=1$. Indeed, $\varphi_y (x) = \langle x,y\rangle=\langle \sum\limits_{j=1}^n \xi_jp_j,\sum\limits_{j=1}^n \eta_jp_j\rangle=\sum\limits_{j=1}^n \xi_j\eta_j = f_{\eta}(\xi)=1=\|x\|_{\mathcal C_E}^2$.

If $z\in  \mathcal C_E^h, \ \|z\|_{ \mathcal C_E} \leq 1$ then $|\varphi_y(z)|= |\sum\limits_{j=1}^n \eta_j(z(\eta_j),\eta_j)| \leq 1.$
The last inequality follows from
$$\{(z(\eta_1),\eta_1),(z(\eta_2),\eta_2),\ldots ,(z(\eta_n),\eta_n)\}\prec \prec \{\mu(1,z),\mu(2,z),\ldots ,\mu(n,z)\}$$
(see  Lemma 4.1. \cite{gohb} Ch. II, \S 4). Therefore 
$$\|\varphi_y\|=\|x\|_{ \mathcal C_E}=1 \ \ \text{and} \ \ \varphi_y (x)=\|x\|_{ \mathcal C_E}^2=1.$$
This means that $\varphi_y$ is a support functional at $x$ \ for \ $( \mathcal C_E^h,\|\cdot\|_E)$.

Consequently,
\[0=\langle H_0(x),y\rangle=\langle \xi_1 H_0(p_1)+\ldots +\xi_n H_0(p_n), \eta_1 p_1+\ldots +\eta_n p_n \rangle=\]
\[=(\xi_1\eta_2-\xi_2\eta_1)\langle H_0(p_1), p_2 \rangle +(\xi_1\eta_3-\xi_2\eta_3)\langle H_0(p_1), p_3 \rangle+\ldots +\]
\[+ (\xi_1\eta_n-\xi_2\eta_n)\langle H_0(p_1), p_n \rangle+(\xi_3\eta_1-\xi_3\eta_2)\langle H_0(p_3), p_1 \rangle+\ldots+\]
\[ +(\xi_n\eta_1-\xi_n\eta_2)\langle H_0(p_n), p_1 \rangle. \ \ \ \ \ \  \eqno (4)\]

Let now $\tilde{x} = \xi_1 p_1+\xi_2 p_2-\xi_3 p_3-\ldots -\xi_n p_n$ and $\tilde{y} = \eta_1 p_1+\eta_2 p_2-\eta_3 p_3-\ldots -\eta_n p_n$. As above, we have that $\varphi_{\tilde{y}}(\cdot)=\langle\cdot, \tilde{y}\rangle$ is a support functional at $\tilde{x}$.  Consequently,
\[0=\langle H_0(\tilde{x}),\tilde{y}\rangle=(\xi_1\eta_2-\xi_2\eta_1)\langle H_0(p_1), p_2 \rangle +(-\xi_1\eta_3+\xi_2\eta_3)\langle H_0(p_1), p_3 \rangle+\ldots + \]
\[+(-\xi_1\eta_n+\xi_2\eta_n)\langle H_0(p_1), p_n \rangle+(-\xi_3\eta_1+\xi_3\eta_2)\langle H_0(p_3), p_1 \rangle+\ldots+\]
\[ +(-\xi_n\eta_1+\xi_n\eta_2)\langle H_0(p_n), p_1 \rangle. \ \ \ \ \ \  \eqno (5)\]
Summing $(4)$ and $(5)$ we obtain  $$2(\xi_1\eta_2-\xi_2\eta_1)\langle H_0(p_1), p_2 \rangle=0,$$
that is,  $$ \langle H(p_1), p_2 \rangle =\langle H_0(p_1), p_2 \rangle = 0.$$
\end{proof}

\begin{pro}\label{p33} Let $\eta\in \mathcal  H, \ \|\eta\|_{\mathcal H}=1, \ p=\eta\otimes \eta, \ x\in \mathcal K(\mathcal H)^h$, and let $\mathrm{Tr}(xq)=0$ for any one dimensional projection $q$ with $qp=0$ . Then there exists $f\in \mathcal H$ such that $x=\eta\otimes f+ f\otimes\eta - (\eta\otimes\eta)(f\otimes\eta), \ \|f\|_{\mathcal H}\leq \|x\|_\infty$.
\end{pro}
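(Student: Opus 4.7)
The hypothesis $\mathrm{Tr}(xq)=0$ for every one-dimensional projection $q=\xi\otimes\xi$ with $qp=0$ unpacks, via $\mathrm{Tr}(xq)=(x\xi,\xi)$ and the observation that $qp=0$ iff $\xi\perp\eta$, into the condition $(x\xi,\xi)=0$ for every $\xi\in\mathcal H$ with $\xi\perp\eta$. My plan is to show that this forces $p^\perp x p^\perp=0$, and then to read off the desired representation from the resulting block decomposition of $x$ with respect to $\mathbf 1=p+p^\perp$, taking $f:=x\eta$.

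For the first step, note that for every $h\in\mathcal H$ the vector $p^\perp h$ is orthogonal to $\eta$, so
\[
(p^\perp x p^\perp h,h)=(x p^\perp h, p^\perp h)=0.
\]
On a complex Hilbert space, vanishing of the quadratic form $h\mapsto(Th,h)$ forces $T=0$, by the complex polarization identity applied to $T=p^\perp x p^\perp$. Hence
\[
x=(p+p^\perp)x(p+p^\perp)=p x p + p x p^\perp + p^\perp x p.
\]

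Setting $f:=x\eta$, self-adjointness of $x$ gives $(f,\eta)=(x\eta,\eta)\in\mathbb R$, and a direct computation based on $(\xi\otimes\zeta)(h)=(h,\zeta)\xi$, together with $x=x^*$ (used to rewrite $(x p^\perp h,\eta)$ as $(h,p^\perp x\eta)$), yields
\[
p x p=(f,\eta)\,\eta\otimes\eta,\quad p x p^\perp=\eta\otimes\bigl(f-(f,\eta)\eta\bigr),\quad p^\perp x p=\bigl(f-(f,\eta)\eta\bigr)\otimes\eta,
\]
the third identity being the adjoint of the second. Since $(f,\eta)$ is real, the conjugate-linearity of $\otimes$ in its second slot introduces no extra conjugates, and summing the three pieces gives
\[
x=\eta\otimes f+f\otimes\eta-(f,\eta)\,\eta\otimes\eta=\eta\otimes f+f\otimes\eta-(\eta\otimes\eta)(f\otimes\eta),
\]
where in the last step I have used the elementary identity $(\eta\otimes\eta)(f\otimes\eta)=(f,\eta)\,\eta\otimes\eta$. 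The norm bound $\|f\|_{\mathcal H}=\|x\eta\|_{\mathcal H}\leq \|x\|_\infty$ is immediate from $\|\eta\|_{\mathcal H}=1$.

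The only mildly delicate point is the identity $p^\perp x p^\perp=0$, which rests on the complex polarization identity; everything else reduces to routine $2\times 2$ block-matrix bookkeeping on the orthogonal decomposition $\mathcal H=\mathbb C\eta\oplus(\mathbb C\eta)^\perp$.
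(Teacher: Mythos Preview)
Your proof is correct. The main difference from the paper lies in how you establish $p^\perp x p^\perp=0$. The paper argues inductively: it first shows $qxq=0$ for each one-dimensional $q$ orthogonal to $p$, then uses that $(q+e)x(q+e)$ is self-adjoint and compact to find a one-dimensional eigenprojection $r\le q+e$, forcing $(q+e)x(q+e)=0$, and continues to build finite-dimensional projections $g_n\uparrow p^\perp$ with $g_n x g_n=0$. You bypass this entirely by noting that $(p^\perp x p^\perp h,h)=0$ for all $h$ and invoking complex polarization. Your route is shorter and in fact does not use the compactness of $x$ at all (it works for any bounded $x$; self-adjointness alone would already suffice since $p^\perp x p^\perp$ is self-adjoint). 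The remaining block computation and the choice $f=x\eta$ are essentially the same in both proofs, with the paper writing $x=px+(I-p)xp$ in one step where you split into three blocks.
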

\begin{proof}
If $q$ is an one dimensional projection with $qp=0$ then $qxq = \alpha q$ for some $\alpha \in \mathbb R$, and $0=\mathrm{Tr}(xq)=\mathrm{Tr}(qxq) = \mathrm{Tr}(\alpha q)=\alpha,$
that is, $\alpha=0$ and $qxq =0$. Let
$$e\in \mathcal P(\mathcal H), \ \dim e(\mathcal H)=1, \ ep=0, \ eq=0, \ y=(q+e)x(q+e).$$
If $y\neq 0$ then there exists $r \in \mathcal P(\mathcal H), \ \dim r(\mathcal H)=1$ such that $r\leq q+e$ and $rxr = ryr=\beta r$ for some $0\neq \beta \in \mathbb R$. Since $rp=0$, it follows that $0=\mathrm{Tr}(xr)=\mathrm{Tr}(rxr) = \beta\neq 0$. Thus $y=0$. Continuing this process, we construct a sequence of finite-dimensional projections $g_n \uparrow (I-p)$ such that $g_nxg_n =0$ for all $n \in \mathbb N$, where $I(h)=h, \ h \in \mathcal H$. Consequently,  $(I-p)x(I-p)=0$.

If $f=x(\eta)$ then $xp = f\otimes\eta$ and $px = \eta\otimes f$. In addition,
 \[(I-p)xp(h)=  (I-p)x((h,\eta)\eta))=(h,\eta)(I-p)f , \ h \in \mathcal H, \]
that is, $(I-p)xp = (I-p)f\otimes\eta$. Therefore,
\[x = px+(I-p)xp =  \eta\otimes f+ (I-p)f\otimes\eta \ \ \text{and} \ \ \|f\|_{\mathcal H}\leq \|x\|_\infty.\]
\end{proof}

\begin{pro}\label{p34}
 Let $\eta\in \mathcal H, \ \|\eta\|_{\mathcal H}=1, \ p=\eta\otimes \eta$. Then there exists $f\in \mathcal H$ such that \[H(\eta\otimes \eta) = \eta\otimes f+ f\otimes \eta, \ \|f\|_{\mathcal H}\leq \|H\|.\]
\end{pro}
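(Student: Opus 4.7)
The plan is to chain Propositions~\ref{p32} and~\ref{p33} applied to $x:=H(p)$, where $p=\eta\otimes\eta$. Since $H$ maps $\mathcal C_E^h$ into itself and $\mathcal C_E\subset \mathcal K(\mathcal H)$, the operator $H(p)$ lies in $\mathcal K(\mathcal H)^h$. Proposition~\ref{p32} then yields $\langle H(p),\xi\otimes\xi\rangle=0$ for every unit vector $\xi$ with $(\xi,\eta)=0$. Unpacking the bracket through $\langle x,\xi\otimes\xi\rangle=\mathrm{Tr}((\xi\otimes\xi)x)=(x\xi,\xi)$ and using cyclicity of the trace, this is exactly the hypothesis of Proposition~\ref{p33}: $\mathrm{Tr}(H(p)\,q)=0$ for every one-dimensional projection $q$ with $qp=0$.

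Applying Proposition~\ref{p33} therefore supplies $f_0\in\mathcal H$ with
\[
H(p)=\eta\otimes f_0+f_0\otimes\eta-(\eta\otimes\eta)(f_0\otimes\eta),\qquad \|f_0\|_{\mathcal H}\le\|H(p)\|_\infty.
\]
A short computation identifies the correction term as $(\eta\otimes\eta)(f_0\otimes\eta)=(f_0,\eta)\,\eta\otimes\eta$. Since $H(p)=H(p)^*$, the scalar $(f_0,\eta)=(H(p)\eta,\eta)$ is automatically real, so the correction term is symmetric in the two tensor slots and can be absorbed by setting $f:=f_0-\tfrac12(f_0,\eta)\,\eta$. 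Expanding $\eta\otimes f+f\otimes\eta$ and using $(f_0,\eta)\in\mathbb R$ then recovers $H(p)$ on the nose, which is the required form.

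For the norm bound, decompose $f_0=(f_0,\eta)\eta+g$ with $g\perp\eta$; then $f=\tfrac12(f_0,\eta)\eta+g$, so $\|f\|_{\mathcal H}^2=\|f_0\|_{\mathcal H}^2-\tfrac34(f_0,\eta)^2\le\|f_0\|_{\mathcal H}^2$. Combining this with the estimates recorded in Section~2,
\[
\|f\|_{\mathcal H}\le\|f_0\|_{\mathcal H}\le\|H(p)\|_\infty\le\|H(p)\|_{\mathcal C_E}\le\|H\|\cdot\|p\|_{\mathcal C_E}=\|H\|,
\]
where $\|p\|_{\mathcal C_E}=1$ because the singular value sequence of a rank-one projection is $(1,0,0,\ldots)$ and the continuous embeddings $\ell_1\hookrightarrow E\hookrightarrow \ell_\infty$ have unit constants. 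I do not expect a real obstacle: the proposition is essentially the immediate consequence of pairing Propositions~\ref{p32} and~\ref{p33}. The only delicate observation is that self-adjointness of $H(p)$ forces $(f_0,\eta)\in\mathbb R$, which is precisely what permits the absorption of the asymmetric correction term into a single vector $f$.
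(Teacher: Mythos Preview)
Your proof is correct and follows the same overall scheme as the paper: apply Proposition~\ref{p32} to verify the hypothesis of Proposition~\ref{p33} for $x=H(p)$, then dispose of the correction term $(\eta\otimes\eta)(f_0\otimes\eta)=(f_0,\eta)\,\eta\otimes\eta$.

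The one place you diverge is in how that correction term is eliminated. The paper uses the skew-Hermitian property directly: since $\langle H(p),p\rangle=0$, a short computation gives $(\eta,f_0)=0$, so the correction term vanishes outright and one may take $f=f_0$ (in particular $f\perp\eta$). You instead use only the self-adjointness of $H(p)$ to conclude $(f_0,\eta)\in\mathbb R$, and then absorb the term by shifting to $f=f_0-\tfrac12(f_0,\eta)\eta$. Both routes are valid; the paper's gives the extra orthogonality $f\perp\eta$ for free, while yours shows that the conclusion of the proposition already follows from the weaker input $H(p)=H(p)^*$ together with Proposition~\ref{p32}. One minor quibble: your identity ``$(f_0,\eta)=(H(p)\eta,\eta)$'' is literally $(H(p)\eta,\eta)=(\eta,f_0)=\overline{(f_0,\eta)}$; the two sides agree only after you have established reality, so it is cleaner to compute $(H(p)\eta,\eta)=(\eta,f_0)$ first and then invoke self-adjointness.
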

\begin{proof}
If $x=H(\eta\otimes \eta), \ \xi\in \mathcal H, \ (\eta,\xi)=0, \ q=\xi\otimes \xi,$ \ then by Proposition \ref{p32}  we obtain that $$(x(\xi),\xi)= \langle x, \xi\otimes\xi \rangle= \mathrm{Tr}(x\cdot \xi\otimes\xi) = 0.$$
Using Proposition \ref{p33}, we
have that there  exists $f\in \mathcal H$ such that
$$H(\eta\otimes \eta)=x=\eta\otimes f+ f\otimes \eta - (\eta\otimes\eta)(f\otimes\eta).$$
Since $H$ is a skew-Hermitian operator, it follows that
\[0= \langle H(\eta\otimes \eta),\eta\otimes \eta \rangle= \langle \eta\otimes f+ f\otimes \eta-(\eta\otimes \eta)(f\otimes \eta), \eta\otimes \eta\rangle =\]
\[= \mathrm{Tr}((\eta\otimes \eta)(\eta\otimes f+ f\otimes \eta-(\eta\otimes \eta)(f\otimes \eta)))
=\]
\[=\mathrm{Tr}((\eta\otimes \eta)(\eta\otimes f))=((\eta\otimes f)(\eta),\eta)=(\eta,f).\]
Thus $(\eta,f)=0$ and  $x=\eta\otimes f+ f\otimes \eta-(\eta\otimes \eta)(f\otimes \eta)=\eta\otimes f+ f\otimes \eta$.
In addition,
\[\|f\|_{\mathcal H}\leq \|x\|_\infty \leq \|x\|_{\mathcal C_E} =\|H(\eta\otimes \eta)\|_{\mathcal C_E}\leq \|H\|\cdot \|\eta\otimes \eta\|_{\mathcal C_E}=\|H\|\cdot \|\eta\otimes \eta\|_\infty = \|H\|.\]
\end{proof}

\begin{pro}\label{p35}
 There exists $a\in \mathcal B(\mathcal H)$ such that $H(x) = ax+xa^*$ for every $x\in \mathcal C_E^h$.
\end{pro}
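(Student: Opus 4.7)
The plan is to extract a candidate $a\in\mathcal B(\mathcal H)$ from the data of Proposition \ref{p34}, verify the desired identity $H(x)=ax+xa^*$ on $\mathcal F(\mathcal H)^h$, and then extend to $\mathcal C_E^h$ by density. If the identity were to hold on $x=\eta\otimes\eta$ (unit $\eta$), then matching it against Proposition \ref{p34} forces $a\eta = f_\eta + (a\eta,\eta)\eta$ with $(a\eta,\eta)+\overline{(a\eta,\eta)}=0$, i.e.\ $(a\eta,\eta)\in i\mathbb R$; after the harmless normalization $a\leftarrow a+isI$ (which does not alter $ax+xa^*$ since it contributes $isx-isx=0$), we may further assume $(a\eta,\eta)=0$, so $a$ must in fact be skew-adjoint with $a\eta=f_\eta$ for every unit $\eta$. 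This reverse-engineering tells us precisely how to build $a$.

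To realize $a$ concretely, fix an orthonormal basis $\{e_n\}_{n=1}^\infty$ of $\mathcal H$ and set $ae_n:=f_{e_n}$. The off-diagonal inner products $(ae_n,e_m)$ for $m\neq n$ are read off by applying Proposition \ref{p34} to the four unit vectors $(e_m\pm e_n)/\sqrt 2$ and $(e_m\pm ie_n)/\sqrt 2$: expanding each $\eta\otimes\eta$ in the basis $\{e_k\otimes e_l\}$ and using linearity of $H$ to isolate the rank-two self-adjoint pieces $e_m\otimes e_n+e_n\otimes e_m$ and $i(e_m\otimes e_n-e_n\otimes e_m)$, one compares with $\eta\otimes f_\eta+f_\eta\otimes\eta$ to pin down the two off-diagonal scalars. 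Boundedness $\|a\|\leq\|H\|$ then follows because, for any unit $\xi\in\mathrm{span}\{e_n\}$, Proposition \ref{p34} applied at $\xi$ yields the same $f_\xi=a\xi$ via our construction (with $(a\xi,\xi)=0$ by the normalization), and $\|f_\xi\|_{\mathcal H}\leq\|H\|$ gives the operator-norm bound; $a$ then extends by continuity to all of $\mathcal H$.

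With $a\in\mathcal B(\mathcal H)$ in hand, the identity $H(x)=ax+xa^*$ holds on every rank-one self-adjoint operator $\eta\otimes\eta$ directly from Proposition \ref{p34} and on the rank-two self-adjoint operators spanned by two basis vectors by the construction above. Every finite-rank self-adjoint operator is a real linear combination of rank-one projections $\eta_j\otimes\eta_j$ (spectral theorem), so the identity extends to all of $\mathcal F(\mathcal H)^h$ by real-linearity of both sides. In the separable case, $\mathcal F(\mathcal H)^h$ is $\|\cdot\|_{\mathcal C_E}$-dense in $\mathcal C_E^h$ and both $H$ and $x\mapsto ax+xa^*$ are $\|\cdot\|_{\mathcal C_E}$-continuous, which finishes the proof. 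In the perfect case, we instead use the $\sigma(\mathcal C_E,\mathcal C_E^\times)$-density of $\mathcal F(\mathcal H)^h$ recorded at the end of \S2 and a weak-limit argument based on the Fatou property of $\mathcal C_E$.

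The main technical difficulty lies in the construction step: one has to verify that the pairs of off-diagonal entries extracted independently from different unit vectors in each two-dimensional coordinate plane are mutually self-consistent, that the purely imaginary diagonal gauge can be normalized uniformly across \emph{all} basis vectors simultaneously, and that the resulting matrix is not merely entrywise bounded but genuinely promotes to a bounded operator on $\mathcal H$. The perfect-ideal case of the final density step is the other delicate point, since norm density of $\mathcal F(\mathcal H)^h$ fails and one has to justify passage to the weak limit in the identity $H(x)=ax+xa^*$.
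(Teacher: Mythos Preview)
Your outline tracks the paper's proof: define $a$ on an orthonormal basis by $ae_n:=f_{e_n}$ (Proposition~\ref{p34}), verify $H(x)=ax+xa^*$ on $\mathcal F(\mathcal H)^h$, then extend by density --- norm density in the separable case, and in the perfect case $\sigma(\mathcal C_E^h,\mathcal C_E^\times)$-density together with weak continuity of $H$ (the paper invokes the argument of \cite[Theorem~4.4]{ach} for this continuity rather than a bare Fatou-property limit). The paper is in fact terser than you: it simply declares that the projections $\eta_n\otimes\eta_n$ form a real-linear basis of $\mathcal F(\mathcal H)^h$ and that $\sup_n\|f_n\|\le\|H\|$ forces $a\in\mathcal B(\mathcal H)$; you are right to sense that these two assertions, read literally, are inadequate, and that the off-diagonal self-adjoint pieces $e_m\otimes e_n+e_n\otimes e_m$, $i(e_m\otimes e_n-e_n\otimes e_m)$ and the operator-norm bound need a genuine argument.

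But your own sketch does not supply that argument; it only relocates the gap. The key sentence --- ``Proposition~\ref{p34} applied at $\xi$ yields the same $f_\xi=a\xi$ via our construction (with $(a\xi,\xi)=0$ by the normalization)'' --- is circular: the equality $a\xi=f_\xi$ for \emph{every} finite-span unit vector $\xi$ is exactly the consistency you list as unresolved in your final paragraph, and your spectral-theorem step (passing from rank-one projections $\eta\otimes\eta$ to all of $\mathcal F(\mathcal H)^h$) presupposes it. The ``harmless normalization $a\leftarrow a+isI$'' shifts $(a\eta,\eta)$ by a single global constant and cannot force $(a\xi,\xi)=0$ for all $\xi$ simultaneously; moreover $(ae_n,e_n)=(f_{e_n},e_n)=0$ already holds without any normalization (this is established inside the proof of Proposition~\ref{p34}), so there is no gauge freedom left to spend. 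Until you actually verify $a\xi-f_\xi\in i\mathbb R\cdot\xi$ for general $\xi$ --- for instance by computing $H$ on the rank-two pieces via $(e_m\pm e_n)/\sqrt2$ and $(e_m\pm ie_n)/\sqrt2$ and comparing with $ax+xa^*$, which you propose but do not carry out --- neither the extension to $\mathcal F(\mathcal H)^h$ nor the bound $\|a\xi\|=\|f_\xi\|\le\|H\|$ is justified.
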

\begin{proof}
Let $\{p_i\}_{i=1}^{\infty} = \{\eta_i \otimes \eta_i\}_{i=1}^{\infty}$ be a basis in  real linear  space  $ \mathcal F(\mathcal H)^h$, where $\{\eta_i\}_{i=1}^{\infty}$ is an orthonormal basis of $\mathcal H$. For every $\eta_i\in \mathcal H$ there exists $f_i\in \mathcal H$ such that $H(\eta_i \otimes \eta_i) = \eta_i\otimes f_i+ f_i\otimes \eta_i$, and $\|f_i\|_{\mathcal H}\leq \|H\|$ for all $i \in \mathbb N$ (see Proposition \ref{p34}). Define a linear operator $a\colon \mathcal H\to \mathcal H$  setting $a(\eta_i)=f_i$. Since $\|f_i\|_{\mathcal H}\leq \|H\|$ for all $i \in \mathbb N$, it follows that   $a\in \mathcal B(\mathcal H)$, in addition, $H(p_i) = \eta_i\otimes a(\eta_i)+ a(\eta_i)\otimes \eta_i$. Since $\eta_i\otimes a(\eta_i) = (\eta_i\otimes \eta_i)a^*$ and $a(\eta_i)\otimes \eta_i = a(\eta_i\otimes \eta_i)$, it follows that $H(x) = ax+xa^*$ for all $x\in \mathcal F(\mathcal H)^h$.

If  $(\mathcal C_E, \|\cdot\|_{\mathcal C_E})$ is a separable space then $\mathcal F(H)^h$ is dense in  $(\mathcal{C}_E^h, \|\cdot\|_{\mathcal C_E})$. Consequently, $H(x) = ax+xa^*$ for all $x \in \mathcal C_E^h$.

Let now   $(\mathcal C_E, \|\cdot\|_{\mathcal C_E})$ be a perfect Banach symmetric ideal. Repeating the proof of Theorem 4.4 \cite{ach}, we obtain that the skew-Hermitian operator $H$ is a  $\sigma(\mathcal{C}_E^h, \mathcal C_E^{\times})$-continuous. Since  the space $\mathcal F(\mathcal H)^h$ is  $\sigma(\mathcal{C}_E^h, \mathcal C_E^{\times})$-dense in  $\mathcal{C}_E^h$, it follows that $H(x) = ax+xa^*$ for all $x \in \mathcal C_E^h$.
\end{proof}
The following Proposition completes the proof of Theorem  \ref{t12}.
\begin{pro}\label{p36}
$a=ib$ for some $b\in \mathcal B(\mathcal H)^h$.
\end{pro}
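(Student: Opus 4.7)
The plan is to extract the constraint imposed by the skew-Hermitian hypothesis on rank-one projections alone and to use this to conclude that $a+a^{*}=0$; setting $b:=-ia$ will then give the required self-adjoint $b$ with $a=ib$.

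Concretely, I would fix an arbitrary unit vector $\eta\in\mc H$ and set $p=\eta\otimes\eta$. By equation (\ref{e13}) in the proof of Proposition \ref{p32}, the skew-Hermitian property of $H$ evaluated at $p$ already gives
\[
0=[H(p),p]=\langle H(p),p\rangle=(H(p)\eta,\eta).
\]
Plugging in the representation $H(p)=ap+pa^{*}$ from Proposition \ref{p35} and using $p(\eta)=\eta$ together with $p(a^{*}\eta)=(a^{*}\eta,\eta)\eta$, a short computation yields
\[
(H(p)\eta,\eta)=(a\eta,\eta)+(a^{*}\eta,\eta)=(a\eta,\eta)+\overline{(a\eta,\eta)}=2\,\mathrm{Re}(a\eta,\eta).
\]
Hence $\mathrm{Re}(a\eta,\eta)=0$ for every unit vector $\eta$, and by scaling for every $\eta\in\mc H$.

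Since $a+a^{*}$ is a bounded self-adjoint operator with $((a+a^{*})\eta,\eta)=2\,\mathrm{Re}(a\eta,\eta)=0$ for every $\eta\in\mc H$, the standard fact that a bounded self-adjoint operator with identically vanishing quadratic form on a complex Hilbert space is zero forces $a^{*}=-a$. Defining $b=-ia$, we have $b\in\mc B(\mc H)$ and $b^{*}=ia^{*}=-ia=b$, so $b=b^{*}$ and $a=ib$, which is the claim.

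I do not anticipate any real obstacle here: the whole argument reduces to testing the skew-Hermitian condition on a single rank-one projection, and the deficiency $a+a^{*}$ that we wish to annihilate is automatically self-adjoint, so the vanishing of the diagonal quadratic form $(\,\cdot\,\eta,\eta)$ on $\mc H$ is enough and no separate off-diagonal polarization step is required.
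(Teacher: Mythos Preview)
Your proof is correct and follows essentially the same route as the paper: both test the skew-Hermitian condition on rank-one projections $p=\eta\otimes\eta$ to conclude that the self-adjoint part $\tfrac12(a+a^{*})$ of $a$ has vanishing diagonal quadratic form and is therefore zero. The paper first splits $a=a_{1}+ia_{2}$ with $a_{1},a_{2}$ self-adjoint, observes that $S_{2}(x)=i(a_{2}x-xa_{2})$ is itself skew-Hermitian so that $S_{1}(x)=a_{1}x+xa_{1}$ must be as well, computes $\mathrm{Tr}(S_{1}(p)p)=2\,\mathrm{Tr}(pa_{1})=0$, and then cites an external lemma to pass from $\mathrm{Tr}(pa_{1})=0$ for all one-dimensional $p$ to $a_{1}=0$; your direct computation on $H$ itself avoids both the auxiliary decomposition and the external citation, which is a mild streamlining but not a different idea.
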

\begin{proof} If $a=a_1+ia_2, \ a_1,a_2\in \mathcal B(\mathcal H)^h$, then
\[H(x)=ax+xa^* = a_1x+xa_1+i(a_2x-xa_2)=S_1(x_1)+S_2(x),\]
where $S_1(x) = a_1x+xa_1, \ S_2(x) = i(a_2x-xa_2), \ x \in \mathcal C_E^h$. Since $H$ and $S_2$ are skew-Hermitian, it follows that $S_1=H-S_2$ is also skew-Hermitian.

If $p \in \mathcal P(\mathcal H), \ \dim p(\mathcal H)=1$, then the lineal functional $\varphi(y) = \langle y,p\rangle = \mathrm{Tr}(yp), \ y \in \mathcal C_E^h$, is support functional at $p$. Thus  $\mathrm{Tr}(pa_1p+pa_1)=\mathrm{Tr}(S_1(p)p)=0,$
that is, $-\mathrm{Tr}(pa_1)=\mathrm{Tr}(pa_1p)=\mathrm{Tr}(pa_1).$
This means that $\mathrm{Tr}(pa_1)=0$ for all $p \in \mathcal P(\mathcal H)$ with $\dim p(\mathcal H)=1$. Consequently,  $\mathrm{Tr}(xa_1)=0$  for all $x\in \mathcal F(\mathcal H)$, and by   \cite[Lemma  2.1]{ak} we have $a_1=0$. Therefore,  $a=ia_2$.
\end{proof}

\section{Skew-Hermitian operators in Orlicz, Lorentz, and Marcinkiewicz ideals of compact operstors}

In this section we present applications of Theorem \ref{t12}  to Orlicz, Lorentz and Marcinkiewicz ideals of compact operators.

1. Let $\Phi$ be an {\it Orlicz function}, that is, $\Phi:[0,\ii)\to [0,\ii)$ is left-continuous, convex, increasing and such that $\Phi(0)=0$ and $\Phi(u)>0$ for some $u\ne 0$ (see, for example, \cite[Ch.\,2, \S\,2.1]{es}, \cite[Ch.\,4]{lt}).  Let
$$
 l_\Phi=\left \{\xi= \{ \xi_n \}_{n = 1}^\ii\in  \ell_{\infty}: \  \sum\limits_{n=1}^\ii \Phi\left (\frac {|\xi_n |}a \right )
<\ii \text { \ for some \ } a>0 \right \}
$$
be the corresponding {\it Orlicz sequence space},  and let
$$\| \xi\|_\Phi=\inf \left \{ a>0:\ \sum\limits_{n=1}^\ii \Phi\left (\frac {|\xi_n |}a \right ) \leq 1\right\}
$$
be the {\it Luxemburg norm} in $ l_\Phi$. It is well-known that  $( l_\Phi, \| \cdot\|_\Phi)$ is a symmetric sequence space and the norm $\| \cdot\|_\Phi$  has the Fatou property.

If $\Phi(u)>0$ for all $u\ne 0$, then $\sum\limits_{n=1}^\ii\Phi\left (\frac1a \right )=\ii$ for each $a>0$, hence $\mathbf 1=\{1,1,...\}\notin l_\Phi$ and $l_\Phi\su c_0$. Consequently, we can define Orlicz ideal of compact operators
$$
\mc C_\Phi=\mc C_{l_\Phi},\ \ \| x\|_\Phi=\| x\|_{\mc C_{l_\Phi}}, \ x \in \mc C_\Phi.
$$
Now, Theorem  \ref{t12}  yield the  following.

\begin{teo}\label{t41} Let $\Phi$ be an Orlicz function such that $\Phi(u)>0$ for all $u\ne 0$ and $l_\Phi \neq l_2$.  Let  $H\colon \mc C_\Phi^h \to \mc C_\Phi^h$  be a skew-Hermitian operator. Then there exists self-adjoint operator $a\in \mathcal B(\mathcal H)$ such that $H(x)=i(xa - ax)$ for all $x \in \mc C_\Phi^h$.
\end{teo}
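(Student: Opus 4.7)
The plan is to recognize that Theorem \ref{t41} is a direct corollary of Theorem \ref{t12}, so the task reduces to verifying that the Orlicz ideal $\mathcal C_\Phi$ falls into the class of Banach symmetric ideals covered by that theorem, namely that $(\mathcal C_\Phi, \|\cdot\|_\Phi)$ is a perfect Banach symmetric ideal distinct from $\mathcal C_2$.

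First I would check that $\mathcal C_\Phi$ is a genuine Banach ideal of compact operators in the sense of Section 2. Under the hypothesis $\Phi(u)>0$ for every $u\neq 0$, the paragraph preceding the theorem already observes that $\mathbf 1 \notin l_\Phi$ and hence $l_\Phi \su c_0$. Combined with the fact (stated in the same paragraph) that $(l_\Phi,\|\cdot\|_\Phi)$ is a symmetric sequence space, the general construction $E\mapsto \mathcal C_E$ from Section 2 applies and produces the Banach symmetric ideal $(\mathcal C_\Phi,\|\cdot\|_\Phi) = (\mathcal C_{l_\Phi},\|\cdot\|_{\mathcal C_{l_\Phi}})$ inside $\mathcal K(\mathcal H)$.

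Next I would verify perfectness. The excerpt states that the Luxemburg norm $\|\cdot\|_\Phi$ has the Fatou property on $l_\Phi$. By the equivalence recorded in Section 2 (Fatou property $\Longleftrightarrow$ $E = E^{\times\times}$, attributed to \cite[Vol. II, Ch. 1, Section a]{lt}), this gives $l_\Phi = l_\Phi^{\times\times}$. Passing to the associated ideal and using the identification $\mathcal C_{l_\Phi}^{\times\times} = \mathcal C_{l_\Phi^{\times\times}}$, we obtain $\mathcal C_\Phi = \mathcal C_\Phi^{\times\times}$, so $(\mathcal C_\Phi,\|\cdot\|_\Phi)$ is a perfect Banach symmetric ideal. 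The hypothesis $l_\Phi \neq l_2$ then forces $\mathcal C_\Phi \neq \mathcal C_{l_2} = \mathcal C_2$, because the correspondence $E \leftrightarrow \mathcal C_E$ established in Section 2 is a bijection (indeed $E(\mathcal C_E)=E$).

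Having verified these structural properties, I would finish by applying Theorem \ref{t12} directly to the given skew-Hermitian operator $H\colon \mathcal C_\Phi^h \to \mathcal C_\Phi^h$; it yields a self-adjoint $a\in\mathcal B(\mathcal H)$ with $H(x)=i(xa-ax)$ for all $x\in\mathcal C_\Phi^h$, which is exactly the statement. There is essentially no obstacle here beyond bookkeeping: the only nontrivial point is invoking the Fatou property of the Luxemburg norm to certify perfectness, and that property is an established fact about Orlicz spaces.
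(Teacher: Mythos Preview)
Your proposal is correct and follows exactly the paper's own approach: the paper simply notes that the Luxemburg norm on $l_\Phi$ has the Fatou property (hence $\mathcal C_\Phi$ is perfect) and that $l_\Phi\subset c_0$, and then states Theorem~\ref{t41} as an immediate consequence of Theorem~\ref{t12}. Your added observation that the bijection $E\leftrightarrow \mathcal C_E$ converts $l_\Phi\neq l_2$ into $\mathcal C_\Phi\neq \mathcal C_2$ makes explicit a step the paper leaves implicit.
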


2. Let $\psi$ be a concave function on $[0, \ii)$ with $\psi(0)=0$ and $\psi(t)>0$ for all $t>0$, and let
\[
\Lambda_\psi=\left \{\xi= \{ \xi_n \}_{n = 1}^{\ii} \in  \ell_{\infty}: \  \|\xi \|_{\psi} =
\sum\limits_{n=1}^\ii \xi^*_n(\psi(n)-\psi(n-1))<\ii\right \},
\]
the corresponding {\it Lorentz sequence space}. The pair $(\Lambda_\psi, \|\cdot\|_\psi)$ is a  symmetric sequence space and the norm $\| \cdot\|_{\psi}$  has the Fatou property (see, for example, \cite[Ch.\,II, \S\,5]{kps}). Besides, if $\psi(\ii)=\ii$, then $\mathbf 1\notin\Lambda_\psi$ and   $\Lambda_\psi\su c_0$.
Consequently, we can define Lorentz ideal of compact operators
\[
\mc C_\psi=\mc C_{\Lambda_\psi}, \ \ \| x\|_\psi=\| x\|_{\mc{C}_{\Lambda_\psi}}, \ x\in\mc C_\psi.
\]
Therefore, in the case $\varphi(\ii)=\ii$, Theorem  \ref{t12} is valid for any Lorentz ideal $\mc C_\psi$ of compact operators.

3. Let $\psi$ be as above, and let
\[
M_\psi=\left\{\xi=\{ \xi_n \}_{n = 1}^\ii\in \ell_{\infty}: \  \|\xi \|_{M_\psi} =
\sup\limits_{n\geq1} \frac1{\psi(n)}\sum\limits_{k=1}^n\xi^*_k<\ii\right\},
\]
the corresponding {\it Marcinkiewicz sequence space}. The space $(M_\psi, \|\cdot\|_{M_\psi})$ is a symmetric sequence space and the norm $\| \cdot\|_{M_\psi}$  has the Fatou property (see, for example, \cite[Ch.\,II, \S\,5]{kps}). In addition,
$\mathbf 1\notin M_\psi(\mathbb N)$ if and only if $\lim\limits_{t\to\ii}\frac{\psi(t)}{t}=0$ \cite[Ch.\,II, \S\,5]{kps}). Consequently, in this case, $M_\psi \subset c_0$ and we can define Marcinkiewicz  ideal of compact operators
\[
\mc C_{M_\psi}=\mc C_{M_\psi},\ \ \| x\|_{\mc C_{M_\psi}}=\| x\|_{\mc C_{M_\psi}}, \ x \in \mc C_{M_\psi}.
\]

Therefore, in the case $\lim\limits_{t\to\ii}\frac{\psi(t)}{t}=0$,   Theorem  \ref{t12} is valid for any Marcinkiewicz  ideal $\mc C_{M_\psi}$ of compact operators.

\end{document}